\newtheorem{theorem}{Theorem}[section]
\newtheorem{proposition}[theorem]{Proposition}
\newtheorem{corollary}[theorem]{Corollary}
\newtheorem{lemma}[theorem]{Lemma}
\newtheorem{definition}[theorem]{Definition}
\newtheorem{example}[theorem]{Example}
\newtheorem{xca}[theorem]{Exercise}
\theoremstyle{remark}
\newtheorem{remark}[theorem]{Remark}
\numberwithin{equation}{section}
\newcommand{\abs}[1]{\lvert#1\rvert}
\newcommand{\blankbox}[2]{%
  \parbox{\columnwidth}{\centering
    \setlength{\fboxsep}{0pt}%
    \fbox{\raisebox{0pt}[#2]{\hspace{#1}}}%
  }%
}
\begin{document}

\title{On Banach spaces with angelic Mackey duals}

\author{Douglas Mupasiri}
\address{Department of Mathematics, University of Northern Iowa , Cedar Falls, Iowa 50613}
\email{mupasiri@math.uni.edu}


\subjclass[2000]{Primary 46A50, 46A20}

\date{August 1, 2018.}


\keywords{Mackey duals, bound extension of a locally convex topology, bound topology derived from a locally convex topology, bornologial spaces, the reciprocal Dunford-Pettis property, sequentially reflexive spaces, sequential space, Frech\'{e}t-Urysohn space, angelic spaces}

\begin{abstract}
We show that if $X$ is a sequentially reflexive Banach space, then its Mackey dual $(X^{*},\tau (X^{*}, X))$ is an angelic space. This builds on a result of J. Howard which says that in the Mackey dual $(X^{*}, \tau (X^{*}, X))$ of a Banach space $X$, relative sequential compactness is, in general, strictly stronger than relative compactness and that the two notions of compactness are equivalent if $X$ is reflexive or separable. Our main result gives a characterization of the sequentially reflexive spaces as the Banach spaces $X$ for which the the finest locally convex topology on $X^{*}$ with the same precompact sets as the Mackey topology $\tau (X^{*}, X)$ is the bound extension of $\tau (X^{*}, X)$. 
\end{abstract}

\maketitle

\section{Introduction}
A result of Joe Howard's \cite{HJ} says that in the Mackey dual $(X^{*},\tau (X^{*},X))$ of  a Banach space $X$, relative sequential compactness is strictly stronger than relative compactness and the two notions of compactness are equivalent if $X$ is either reflexive or separable. More recently P. Orno \cite{OP} and M. Valdivia \cite{VM93} independently characterized the sequentially reflexive Banach spaces as precisely those Banach spaces which do not contain a subspace isomorphic to $\ell_{1}$. When combined with Emmanuele's characterization of Banach spaces which do not contain $\ell_{1}$ \cite{EG} and Grothendieck's characterization of Mackey compact subsets of $X^{*}$ \cite{GA1}, Orno's and Vildivia's result suggests that in the Mackey duals of sequentially reflexive Banach spaces the notions of (relative) sequential compactness and (relative) compactness are equivalent. Below we give a direct elementary proof of a result along similar lines. Actually, we show more. Specifically, we show that the sequentially reflexive Banach spaces are exactly the Banach spaces $X$ for which an Eberlein-\v{S}mulian type theorem holds for the Mackey dual $(X^{*},\tau (X^{*},X))$ and for which the finest locally convex topology on $X^{*}$ with the same precompact sets as $\tau (X^{*}, X)$ is a bound topology. Our main result adds a new perspective to the list of characterizations of Banach spaces not containing $\ell_{1}$.  As might be expected the proof of our main result leans heavily on Rosenthal's $\ell_{1}$ theorem \cite{RH}.


\section{Preliminaries}
We use the monographs by G. K\"{o}the \cite{KG} and H. Schaefer \cite{SH} as basic references for the theory of locally convex spaces. Throughout, $X$ denotes a Banach space over the field of real numbers, $B_{X}$ denotes the closed unit ball of $X$. The dual of $X$, $X^{*}$, is the space of continuous linear forms on $X$ and $B_{X^{*}}$ is the closed unit ball of $X^{*}$. The second dual of $X$, $X^{**}$, is the dual of $X^{*}$. We then have two dual pairs $<X,X^{*}>$ and $<X^{*},X^{**}>$ and the four topologies the {\it weak} topology on $X$, $\sigma (X,X^{*})$, the $weak^{*}$ topology on $X^{*}$
$\sigma (X^{*},X)$, the {\it weak} topology on $X^{*}$, $\sigma (X^{*},X^{**})$, 
and the $weak^{*}$ topology $\sigma (X^{**}, X^{*})$ on $X^{**}$.

\

If $(X,||\cdot ||)$ is a Banach space, then the dual norm on $X^{*}$ will be denoted by $||\cdot ||^{*}$ or just $||\cdot ||$ if there is no risk of misunderstanding. The topology defined by a norm $||\cdot ||$ will also be denoted by $||\cdot ||$. If $\mathcal{T}_1$ and $\mathcal{T}_2$ are topologies on $X$, then $\mathcal{T}_1 \prec\mathcal{T}_2$ means that the topology  $\mathcal{T}_1$ ie weaker than the topology $\mathcal{T}_2$. For example, $\mathcal{T}\prec ||\cdot||$ means that the topology $\mathcal{T}$ is courser than the topology defined be the norm $||\cdot||$. 

\

If $(X,\mathcal{T})$ is a locally convex space and $Y$ is a closeed subspace of $X$, then $q:X\rightarrow X/Y$ will denote the canonical quotient mapping. Objects and concepts in $X/Y$ will be labeled with a hat. For example, $\hat{\mathcal{T}}$ will denote the quotient topology of $\mathcal{T}$, and elements of $X/Y$ will be written as $\hat{x}$ 

\

We shall use the notations $(B_{X},weak)$ for the closed unit ball of $X$ with the relative weak topology. The Mackey topology, $\tau (X^{*},X)$ on $X^{*}$ is the topology of uniform convergence on all the weakly compact absolutely convex subsets of $X$. Since the closed absolutely convex hull of a weakly compact set is again weakly compact (by a result of Krein, see \cite[Ch. IV, 11.4, page]{SH}), the ``absolutely convex'' qualification is omitted in some references. By the Mackey-Arens theorem \cite[Ch. IV, 3.2, page 131]{SH}, $\tau (X^{*},X)$ is the finest locally convex topology on $X^{*}$ whose topological dual is $X$. 

\

Recent work on the Mackey dual can be found in Dowling and Mupasiri \cite{DPN} Schl\"{u}chtermann and Wheeler \cite{SG1, SG2} and in earlier works by A. Grothendieck \cite{GA2}, Joe Howard \cite{HJ}, N. Kalton \cite{KN}, and J. Webb \cite{WJ}.

\

The following dual characterization of reflexive Banach spaces is a direct consequence of the definitions:  A Banach space is reflexive if and only if the Mackey topology $\tau (X^{*}, X)$ and the norm topology agree on $X^{*}$. This fact led Jonathan Borwein to make the following definition \cite{BJ}

\begin{definition} A Banach space is {\it sequentially reflexive} if every $\tau (X^{*},X)$ convergent sequence in $X^{*}$ is norm convergent.
\end{definition}

\

Borwein then asked for a characterization of sequentially reflexive spaces. Shortly thereafter, P. Orno and M. Valdivia characterized sequentially reflexive spaces as the Banach spaces not containing $\ell_{1}$. On reflection, Orno's and Valdivia's result suggests that in the Mackey duals of sequentially reflexive Banach spaces the notions of (relative) sequential compactness and (relative) compactness are equivalent. Below we give a proof of a result along similar lines. Actually, we show more. Specifically, we show that the sequentially reflexive Banach spaces are exactly the Banach spaces $X$ for which an Eberlein-\v{S}mulian type theorem holds for the Mackey dual $(X^{*},\tau (X^{*},X))$ and for which the finest locally convex topology on $X^{*}$ with the same precompact sets as $\tau (X^{*}, X)$ is a bound topology. Our main goal in  this note is to prove this equivalence. Our secondary objective is to put the aforemntioned result in the larger context of some new, some not so well-known, and some well-known characterizations of Banach spaces not containing $\ell_{1}$ that we believe fit naturally together.

\

D.H. Fremlin's notion of angelic spaces will figure prominently in the sequel. We now define the notion.

\begin{definition} (see, e.g., K. Floret \cite{FK80} or J.D. Pryce \cite{PJD71}) A Hausdorff topological space $T$ is said to have {\it countably determined compactness} for subsets of $X$, or to be an {\it angelic space}, if every countably compact subset $A$ of $T$ has the following two properties: (i) $A$ is relatively compact and (ii) every $a\in\overline{A}$ is the limit of a sequence in $A$.
\end{definition}

%

The arguments we use to prove the results in this note depend on considerations of different locally convex topologies on the dual space $X^{*}$ that are related to the Mackey topology. For a locally convex space $(E,\tau)$, always assumed Hausdorff, we follow Webb \cite{WJ} and make the following definitions.

\begin{enumerate}
\item $E^{\#}$ is the algebraic dual of $E$;
\item $E'$ is the topological dual of $E$;
\item $E^{b} = (E, \tau)^{b}$ is the space of all bounded linear functionals on $E$\\
\hspace*{4 mm} = $\{f\in E^{\#} : \sup_{x\in B}|f(x)| < \infty\,\, {\rm for\,\, all} \,\tau - {\rm bounded} \,\,B\subset E\}$; 
\item $\tau ^{+}$ is the finest locally convex topology with the same convergence sequences as $\tau$;
\item $\tau ^{p}$ is the finest locally convex topology with the same precompact sets as $\tau$;
\item $\tau ^{g}$ is the topology on $E^{b}$ of uniform convergence on all subsets $A$ of $E$ such that every sequence of points in $A$ has a Cauchy subsequence;
\item $\tau ^{n}$ is the topology on $E^{b}$ of uniform convergence on the family of sets formed by the ranges of all $\tau$-null sequences in $E$;
\item $\tau ^{0}$ is the topology on $E^{b}$ of uniform convergence on all the $\tau$ - precompact subsets of $E$.
\end{enumerate}

\

We shall also use the following notation, following Schaefer.

\begin{itemize}
\item  [(8)] $\tau _{f}$ is the finest topology on $E'$ which agrees with $\sigma (E',E)$ on each equicontinuous subset of $E'$; and
\item  [(9)] $\tau _{c}$ is the topology of $E'$ of uniform convergence on all the compact subsets of $E$.
\end{itemize}

\begin{remark} In general, $\mathcal{T} ^{+}$ and $\mathcal{T} ^{p}$ are incomparable, that is neither $\mathcal{T}^{+} \preceq \mathcal{T}^{p}$ nor  $\mathcal{T} ^{p}\preceq \mathcal{T}^{+}$ holds.
\end{remark}

\begin{proposition} If $(E,\mathcal{T})$ is locally convex, then $(E,\mathcal{T}^{p})' = E^{b}$ and $\mathcal{T}\preceq \mathcal{T}^{p} \preceq \mathcal{T}^{b}$. Also $(E,\mathcal{T}^{+})' = E^{+}$ and $\mathcal{T}\preceq\mathcal{T}^{+}\preceq \mathcal{T}^{b}$.
\end{proposition}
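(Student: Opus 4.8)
The plan is to establish the two dual identities first and then read the ordering of the topologies off from them. Throughout, for $f\in E^{\#}$ let $\mathcal{T}\vee\abs{f}$ denote the locally convex topology generated by $\mathcal{T}$ together with the seminorm $\abs{f}$; being the join of two locally convex topologies it is again locally convex, it is finer than $\mathcal{T}$, and $f$ is continuous for it. I will also write $E^{+}$ for the space of sequentially $\mathcal{T}$-continuous linear forms on $E$, the sequential analogue of $E^{b}$, so that the second identity reads $(E,\mathcal{T}^{+})'=E^{+}$.

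First I would prove $(E,\mathcal{T}^{p})'=E^{b}$. For the inclusion $E^{b}\subseteq(E,\mathcal{T}^{p})'$, take $f\in E^{b}$ and compare $\mathcal{T}$ with $\mathcal{T}\vee\abs{f}$: the claim is that they have the same precompact sets. One direction is automatic since $\mathcal{T}\vee\abs{f}\succeq\mathcal{T}$; for the other, given a $\mathcal{T}$-precompact (hence $\mathcal{T}$-bounded) set $A$, the image $f(A)$ is bounded in $\mathbb{R}$ because $f\in E^{b}$, so under the uniform embedding $x\mapsto(x,f(x))$ of $(E,\mathcal{T}\vee\abs{f})$ into $(E,\mathcal{T})\times\mathbb{R}$ the set $A$ lands inside the precompact product $A\times f(A)$ and is therefore $\mathcal{T}\vee\abs{f}$-precompact. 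Hence $\mathcal{T}\vee\abs{f}$ has the same precompact sets as $\mathcal{T}$, so by the maximality in the definition of $\mathcal{T}^{p}$ we get $\mathcal{T}\vee\abs{f}\preceq\mathcal{T}^{p}$, and $f$ is $\mathcal{T}^{p}$-continuous. For the reverse inclusion I argue by contradiction: if $f$ is $\mathcal{T}^{p}$-continuous but unbounded on some $\mathcal{T}$-bounded set $B$, choose $x_{n}\in B$ with $\abs{f(x_{n})}\ge n^{2}$ and set $y_{n}=x_{n}/n$; boundedness of $B$ forces $y_{n}\to0$ in $\mathcal{T}$, so $K=\{y_{n}:n\ge1\}\cup\{0\}$ is $\mathcal{T}$-compact, hence $\mathcal{T}$-precompact, hence $\mathcal{T}^{p}$-precompact and in particular $\mathcal{T}^{p}$-bounded, forcing $f(K)$ bounded and contradicting $\abs{f(y_{n})}=\abs{f(x_{n})}/n\ge n$. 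The identity $(E,\mathcal{T}^{+})'=E^{+}$ follows by the same two maneuvers with ``precompact set'' replaced by ``convergent sequence'': if $f$ is sequentially continuous then $\mathcal{T}\vee\abs{f}$ has the same convergent sequences as $\mathcal{T}$, giving $\mathcal{T}\vee\abs{f}\preceq\mathcal{T}^{+}$, while conversely a $\mathcal{T}^{+}$-continuous form is continuous along every $\mathcal{T}$-convergent sequence because $\mathcal{T}$ and $\mathcal{T}^{+}$ share their convergent sequences.

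Finally I would deduce the orderings. The relations $\mathcal{T}\preceq\mathcal{T}^{p}$ and $\mathcal{T}\preceq\mathcal{T}^{+}$ are immediate from the definitions, since $\mathcal{T}$ itself belongs to the family of locally convex topologies of which $\mathcal{T}^{p}$ (respectively $\mathcal{T}^{+}$) is the finest member. For the upper bounds I use that $\mathcal{T}^{b}$ is the finest locally convex topology on $E$ with the same bounded sets as $\mathcal{T}$ and that $(E,\mathcal{T}^{b})'=E^{b}$ (see \cite{KG,SH}). Having shown $(E,\mathcal{T}^{p})'=E^{b}$, both $\mathcal{T}^{p}$ and $\mathcal{T}^{b}$ are compatible with the dual pair $\langle E,E^{b}\rangle$; since bounded sets are the same for all topologies compatible with a dual pair \cite[Ch. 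IV]{SH}, and since the $\sigma(E,E^{b})$-bounded sets coincide with the $\mathcal{T}$-bounded sets by the very definition of $E^{b}$, the topology $\mathcal{T}^{p}$ has the same bounded sets as $\mathcal{T}$, whence $\mathcal{T}^{p}\preceq\mathcal{T}^{b}$. For $\mathcal{T}^{+}$ I check the same conclusion directly from the scalar-multiple characterization of bounded sets: if $B$ is $\mathcal{T}$-bounded, $x_{n}\in B$, and $\lambda_{n}\to0$, then $\lambda_{n}x_{n}\to0$ in $\mathcal{T}$ and hence in $\mathcal{T}^{+}$, so $B$ is $\mathcal{T}^{+}$-bounded; thus $\mathcal{T}^{+}$ too has the same bounded sets as $\mathcal{T}$ and $\mathcal{T}^{+}\preceq\mathcal{T}^{b}$.

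The step I expect to be the main obstacle is the forward inclusion $E^{b}\subseteq(E,\mathcal{T}^{p})'$, that is, verifying that adjoining the single seminorm $\abs{f}$ to $\mathcal{T}$ creates no new non-precompact sets among the $\mathcal{T}$-precompact ones; this is exactly where the boundedness of $f$ on bounded sets is consumed, and everything else reduces either to the definitions or to a standard dual-pair argument.
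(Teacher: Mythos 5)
Your argument is correct, but it is organized quite differently from the paper's and proves more. The paper's proof addresses only the chain $\mathcal{T}\preceq\mathcal{T}^{p}\preceq\mathcal{T}^{b}$ and its $\mathcal{T}^{+}$ analogue, and does so by one direct argument: if a $\mathcal{T}$-bounded set $A$ were not $\mathcal{T}^{p}$-bounded, pick $a_{n}\in A$ with $a_{n}\notin nU$ for some $\mathcal{T}^{p}$-neighborhood $U$ of $0$; then $\{(1/\sqrt{n})a_{n}\}$ is $\mathcal{T}$-null, so its range is $\mathcal{T}$-precompact, hence $\mathcal{T}^{p}$-precompact and in particular $\mathcal{T}^{p}$-bounded, a contradiction; the inequality $\mathcal{T}^{p}\preceq\mathcal{T}^{b}$ then falls out of the definition of the bound extension. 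The dual identities $(E,\mathcal{T}^{p})'=E^{b}$ and $(E,\mathcal{T}^{+})'=E^{+}$ are not argued in the paper at all --- they are cited from Webb. You supply self-contained proofs of those identities via the ``adjoin the seminorm $|f|$'' device (the graph embedding into $(E,\mathcal{T})\times\mathbb{R}$ is exactly the right way to see that no new non-precompact sets are created), and your reverse inclusion $(E,\mathcal{T}^{p})'\subseteq E^{b}$ uses the same null-sequence trick the paper uses for boundedness. Where you genuinely diverge on the orderings is $\mathcal{T}^{p}\preceq\mathcal{T}^{b}$: you deduce it from the dual identity together with Mackey's theorem that all topologies of the dual pair $\langle E,E^{b}\rangle$ share their bounded sets, whereas the paper (and your own handling of $\mathcal{T}^{+}$) obtains it more elementarily from the scalar-multiple characterization of bounded sets, without presupposing $(E,\mathcal{T}^{b})'=E^{b}$. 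Both routes are valid; yours has the small virtue of exhibiting the ordering as a formal consequence of the dual computation. The only step you should be conscious of leaning on is the maximality inference ``$\mathcal{T}\vee|f|$ has the same precompact sets (respectively, convergent sequences) as $\mathcal{T}$, hence $\mathcal{T}\vee|f|\preceq\mathcal{T}^{p}$ (respectively, $\preceq\mathcal{T}^{+}$)'': this presupposes that the supremum of all locally convex topologies with the same precompact sets (convergent sequences) as $\mathcal{T}$ still has those precompact sets (convergent sequences), which is precisely what makes $\mathcal{T}^{p}$ and $\mathcal{T}^{+}$ well defined and is the content of the Webb results the paper invokes.
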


\begin{proof}
To prove the first assertion, let $A\subset E$ be $\mathcal{T}$-bounded. If $A$ is not $\mathcal{T}^{p}$-bounded, then there exists a $\mathcal{T}^{p}$-neighborhood $U$ of $0$ and a sequence $\{a_n\}$ in $A$ such that $a_{n}\notin nU$ for all $n\in \mathbb{N}$. But then $\{(1/\sqrt{n})a_{n}\}$ is $\mathcal{T}$-null, hence $\mathcal{T}$-compact and thus $\mathcal{T}^{p}$-precompact, contradicting the fact that the sequence $\{(1/n)x_{n}\}$ is not $\mathcal{T}^{p}$-bounded. This proves that $\mathcal{T}\preceq \mathcal{T}^{p} \preceq \mathcal{T}^{b}$ by the definition of $\mathcal{T}^{b}$.

To prove the second assertion, let $A\subset E$ be $\mathcal{T}$-bounded. If $A$ is not $\mathcal{T}^{+}$-bounded, then there exists a $\mathcal{T}^{+}$-neighborhood $U$ of $0$ and a sequence $\{a_n\}$ in $A$ such that $a_{n}\notin nU$ for all $n\in \mathbb{N}$. But then $\{(1/\sqrt{n})a_{n}\}$ is $\mathcal{T}$-null, hence $\mathcal{T}^{+}$-null, a contradiction. This proves that $\mathcal{T}\preceq\mathcal{T}^{+}\preceq \mathcal{T}^{b}$ by the definition of $\mathcal{T}^{b}$. 
(See also Proposition 2.5 and comment after Corollary 1.7 in \cite{WJ}). 
\end{proof}

The following two propositions, due to J. H. Webb, characterize the conditions under which $\mathcal{T} ^{+}$ and $\mathcal{T} ^{p}$ are comparable.

\begin{proposition} The following statements are equivalent
\begin{itemize}
\item[(a)] $\mathcal{T}^{p} \preceq \mathcal{T}^{+}$
\item[(b)] $E^{+} = E^{b}$
\item[(c)] $\mathcal{T}^{+} = (\mathcal{T}^{p})^{+}$
\item[(d)] $\mathcal{T}^{+} = (\mathcal{T}^{g})^{0}$
\item[(e)] $\mathcal{T}^{+} = (\mathcal{T}^{n})^{0}$
\item[(f)] $\mathcal{T}^{nn} \preceq \mathcal{T}^{+}$
\end{itemize}
\end{proposition}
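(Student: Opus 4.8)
The plan is to prove the equivalence by treating (a), (b), (c) as a short cycle in which two of the implications are essentially free, and then grafting (d), (e), (f) on through the polar calculus of the dual pair $\langle E,E^{b}\rangle$. Two implications cost nothing beyond the preceding proposition. First, applying that proposition to the locally convex space $(E,\mathcal{T}^{p})$ gives $\mathcal{T}^{p}\preceq(\mathcal{T}^{p})^{+}$, so (c) at once yields $\mathcal{T}^{p}\preceq(\mathcal{T}^{p})^{+}=\mathcal{T}^{+}$, which is (a). Second, (a) implies (b) by duality: if $\mathcal{T}^{p}\preceq\mathcal{T}^{+}$, then $E^{b}=(E,\mathcal{T}^{p})'\subseteq(E,\mathcal{T}^{+})'=E^{+}$, and since $E^{+}\subseteq E^{b}$ always holds (a sequentially continuous functional is bounded on bounded sets, as one sees by testing against $n^{-1}x_{n}$ for a bounded sequence $\{x_{n}\}$), we get $E^{+}=E^{b}$. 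To close the triangle I would also record (a) $\Rightarrow$ (c): when $\mathcal{T}^{p}\preceq\mathcal{T}^{+}$ the chain $\mathcal{T}\preceq\mathcal{T}^{p}\preceq\mathcal{T}^{+}$ squeezes the convergent sequences of all three topologies into coincidence, so $\mathcal{T}^{p}$ has the same convergent sequences as $\mathcal{T}$ and hence $(\mathcal{T}^{p})^{+}=\mathcal{T}^{+}$.

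The heart of the matter is the reverse implication (b) $\Rightarrow$ (a). I would first invoke the maximality built into the definition of $\mathcal{T}^{+}$: any locally convex topology with the same convergent sequences as $\mathcal{T}$ is coarser than $\mathcal{T}^{+}$. Since $\mathcal{T}\preceq\mathcal{T}^{p}$ already forces every $\mathcal{T}^{p}$-null sequence to be $\mathcal{T}$-null, it suffices to prove the converse under hypothesis (b): every $\mathcal{T}$-null sequence is $\mathcal{T}^{p}$-null. So let $x_{n}\to 0$ in $\mathcal{T}$. Because (b) makes every $f\in E^{b}$ sequentially $\mathcal{T}$-continuous, $f(x_{n})\to 0$ for all $f\in E^{b}$, i.e. $x_{n}\to 0$ in $\sigma(E,E^{b})$. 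The range $\{x_{n}\}\cup\{0\}$ is $\mathcal{T}$-compact, hence $\mathcal{T}$-precompact, hence $\mathcal{T}^{p}$-precompact, so every subsequence of $(x_{n})$ has a $\mathcal{T}^{p}$-Cauchy further subsequence. Now I would use that $\mathcal{T}^{p}$ and $\sigma(E,E^{b})$ are compatible (both have dual $E^{b}$), so a closed absolutely convex $\mathcal{T}^{p}$-neighbourhood $U$ of $0$ is also $\sigma(E,E^{b})$-closed; a $\mathcal{T}^{p}$-Cauchy sequence that is $\sigma(E,E^{b})$-null then lies eventually in $U$ and is therefore $\mathcal{T}^{p}$-null. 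As every subsequence of $(x_{n})$ has a further subsequence that is $\mathcal{T}^{p}$-null, the whole sequence is $\mathcal{T}^{p}$-null. I expect this passage, upgrading weak nullity to $\mathcal{T}^{p}$-nullity via precompactness and the compatibility of convex closures, to be the main obstacle.

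It remains to attach (d), (e), (f), and here I would work entirely inside the pairing $\langle E,E^{b}\rangle$, since under (b) both $\mathcal{T}^{+}$ and $\mathcal{T}^{p}$ are topologies of that pair. The topologies $\mathcal{T}^{g}$, $\mathcal{T}^{n}$, $\mathcal{T}^{0}$ are topologies of uniform convergence on prescribed families of subsets, so the operators $(\cdot)^{0}$ and the iterate $(\cdot)^{nn}$ are polar topologies whose neighbourhood bases are computed by polarity; by the bipolar theorem, comparing them with $\mathcal{T}^{+}$ reduces to comparing the associated saturated families of sets. The plan is to show, using Grothendieck's description of precompactness together with the identification $E^{+}=E^{b}$, that the ranges of $\mathcal{T}$-null sequences, the sets all of whose sequences have Cauchy subsequences, and the equicontinuous sets defining $\mathcal{T}^{+}$ generate the same polars, whence $(\mathcal{T}^{g})^{0}=(\mathcal{T}^{n})^{0}=\mathcal{T}^{+}$ and $\mathcal{T}^{nn}\preceq\mathcal{T}^{+}$; conversely each of (d), (e), (f) exhibits a polar topology of the pair whose topological dual is $E^{b}$, and comparing duals with $\mathcal{T}^{+}$ returns $E^{b}\subseteq E^{+}$, hence (b). I anticipate that the bookkeeping of which subsets of $E^{b}$ are $\mathcal{T}^{g}$- versus $\mathcal{T}^{n}$-precompact, and the verification that they yield exactly the polars defining $\mathcal{T}^{+}$, will be the technically heaviest, though routine, part of this last stage.
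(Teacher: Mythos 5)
The paper itself offers no proof of this proposition: it is stated as a quotation of results of J.~H.~Webb \cite{WJ} and immediately followed by the next proposition, so there is no in-paper argument to compare yours against. Judged on its own terms, your cycle (c)$\Rightarrow$(a)$\Rightarrow$(b), (a)$\Rightarrow$(c) is correct and cleanly argued from Proposition~2.4. But the key step of (b)$\Rightarrow$(a) as written rests on a false principle: from the $\mathcal{T}^{p}$-precompactness of $\{x_{n}\}\cup\{0\}$ you infer that every subsequence of $(x_{n})$ has a $\mathcal{T}^{p}$-Cauchy further subsequence. In a general locally convex space precompact sets need not be ``sequentially precompact'': in $E=\mathbb{R}^{I}$ with $I=\{0,1\}^{\mathbb{N}}$ the compact set $\{0,1\}^{I}$ contains the sequence $f_{n}(\alpha)=\alpha_{n}$, which has no Cauchy subsequence (a Cauchy subsequence would converge by completeness, and an alternating choice of $\alpha$ forbids that). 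The conclusion you want is nevertheless true and the repair is standard: a $\mathcal{T}^{p}$-precompact set has compact closure in the completion, on which $\mathcal{T}^{p}$ and the Hausdorff topology $\sigma(E,E^{b})$ must coincide; hence $\mathcal{T}^{p}$ and $\sigma(E,E^{b})$ agree on $\{x_{n}\}\cup\{0\}$, and $\sigma(E,E^{b})$-nullity of $(x_{n})$ (which is exactly what hypothesis (b) delivers) already gives $\mathcal{T}^{p}$-nullity, with no Cauchy subsequences needed.

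The second, larger gap is that (d), (e), (f) are not proved at all: what you give is a programme (``show the three families of sets generate the same polars'') with the substantive verifications explicitly deferred as ``routine bookkeeping.'' They are not routine; this is where the content of Webb's paper lives. In particular, $(\mathcal{T}^{g})^{0}$, $(\mathcal{T}^{n})^{0}$ and $\mathcal{T}^{nn}$ are a priori topologies on $((E^{b},\cdot))^{b}$, not on $E$, so one must first justify the identification along the canonical embedding of $E$ into that space, and then actually compare the $\mathcal{T}^{g}$-precompact, $\mathcal{T}^{n}$-precompact and $\mathcal{T}$-limited subsets of $E^{b}$ under the hypothesis $E^{+}=E^{b}$; none of this is carried out. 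As it stands the proposal establishes (a)$\Leftrightarrow$(b)$\Leftrightarrow$(c) (modulo the fix above) but not the equivalence with (d), (e), (f).
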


\begin{proposition}Let $(E,\mathcal{T})$ be a Hausdorff locally convex space. Then the following conditions are equivalent:

\begin{itemize}
\item[(a)] $\mathcal{T}^{+} \preceq \mathcal{T}^{p}$
\item[(b)] Every $\mathcal{T}$-precompact subset of $E$ is $\mathcal{T}^{+}$-precompact.
\item[(c)] Every $\mathcal{T}$-limited subset of $E^{b}$ is $\mathcal{T}^{0}$-compact
\item[(d)] The topologies $\mathcal{T}^{+}$ and $\sigma (E,E^{+})$ coincide on the $\mathcal{T}$-precompact subsets of $E$.
\item[(e)] The topologies $\mathcal{T}^{0}$ and $\sigma (E^{b} ,E)$ coincide on the $\mathcal{T}$-limited subsets of $E^{b}$.
\end{itemize}
\end{proposition}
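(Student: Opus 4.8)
The plan is to run the cycle $(a)\Rightarrow(b)\Rightarrow(e)\Rightarrow(c)\Rightarrow(a)$ and to attach $(d)$ through the separate equivalence $(b)\Leftrightarrow(d)$, reducing everything to a short \emph{dictionary} that translates the defining data of $\mathcal{T}^{+}$ and $\mathcal{T}^{p}$ into statements about seminorms and polars. The first entry is that a seminorm $p$ is $\mathcal{T}^{+}$-continuous if and only if $p(x_{n})\to 0$ for every $\mathcal{T}$-null sequence $(x_{n})$, and that $p$ is $\mathcal{T}^{p}$-continuous if and only if every $\mathcal{T}$-precompact subset of $E$ is totally bounded for $p$; both follow from the maximality in the definitions of $\mathcal{T}^{+}$ and $\mathcal{T}^{p}$ by forming $\sup(\mathcal{T},\mathcal{T}_{p})$ and comparing, respectively, convergent sequences and precompact sets. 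The second entry, the bridge to the $E^{b}$-side, is that $L\subseteq E^{b}$ is $\mathcal{T}$-limited if and only if its polar $L^{\circ}$ in $E$ is a $\mathcal{T}^{+}$-neighbourhood of $0$, i.e. $L$ is $\mathcal{T}^{+}$-equicontinuous; consequently every $\mathcal{T}$-limited set lies in $E^{+}$ and, by Alaoglu--Bourbaki, has $\sigma(E^{b},E)$-compact closure. Throughout I use that $(E^{b},\mathcal{T}^{0})'=E$ and that in a weak topology a set is precompact exactly when it is bounded.

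The two soft implications come first. For $(a)\Rightarrow(b)$ one notes that $\mathcal{T}$-precompact and $\mathcal{T}^{p}$-precompact sets coincide and that precompactness passes to the coarser topology $\mathcal{T}^{+}$. The equivalence $(b)\Leftrightarrow(d)$ I obtain by a completion argument one way and a total-boundedness transfer the other: if $K$ is $\mathcal{T}^{+}$-precompact its closure in the $\mathcal{T}^{+}$-completion is $\mathcal{T}^{+}$-compact, and a continuous bijection from a compact space onto a Hausdorff space is a homeomorphism, so $\mathcal{T}^{+}$ and $\sigma(E,E^{+})$ agree there and hence on $K$; conversely, applying $(d)$ to the $\mathcal{T}$-precompact set $K-K$ yields a $\sigma(E,E^{+})$-neighbourhood $W$ with $W\cap(K-K)\subseteq U$ for a prescribed $\mathcal{T}^{+}$-neighbourhood $U$, and feeding this, together with the $\sigma(E,E^{+})$-boundedness of $K$ (which holds by the preceding proposition) and centres chosen inside $K$, into the definition of total boundedness produces a finite $U$-cover of $K$.

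The analytic heart is $(b)\Rightarrow(e)$ and $(c)\Rightarrow(a)$. For $(b)\Rightarrow(e)$, given a $\mathcal{T}$-limited $L$ I pass to its $\sigma(E^{b},E)$-compact closure $\overline{L}$, use $\mathcal{T}^{+}$-equicontinuity to fix a $\mathcal{T}^{+}$-neighbourhood $U$ on which $\overline{L}$ is uniformly small, cover the set $K$, which is $\mathcal{T}^{+}$-precompact by $(b)$, by finitely many translates $x_{i}+U$, and then an $\varepsilon/3$-estimate shows that control of $\lvert(f-f_{0})(x_{i})\rvert$ at the finitely many points $x_{i}$ already forces $\sup_{x\in K}\lvert(f-f_{0})(x)\rvert$ small; this is precisely $\mathcal{T}^{0}\preceq\sigma(E^{b},E)$ on $\overline{L}$, the reverse inequality being automatic. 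Then $(e)\Rightarrow(c)$ is immediate, since $\overline{L}$ is $\sigma(E^{b},E)$-compact and $(e)$ identifies the two topologies on it. Finally $(c)\Rightarrow(a)$ is an Ascoli argument: writing a $\mathcal{T}^{+}$-continuous seminorm as $p=\sup_{f\in L}\lvert f\rvert$ with $L=\{p\le 1\}^{\circ}$ a $\mathcal{T}$-limited, hence by $(c)$ a $\mathcal{T}^{0}$-compact, set, I note that a $\mathcal{T}$-precompact $K$ is $\mathcal{T}^{0}$-equicontinuous as a family of functionals on $E^{b}$, so its restriction to the compact space $(L,\mathcal{T}^{0})$ is equicontinuous and pointwise bounded; classical Ascoli then makes $K$ relatively compact in $C(L)$ for the supremum norm, that is, totally bounded for $p$, which by the dictionary is $\mathcal{T}^{p}$-continuity of $p$ and so gives $\mathcal{T}^{+}\preceq\mathcal{T}^{p}$.

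The step I expect to be the main obstacle is the Ascoli passage in $(c)\Rightarrow(a)$, together with the correct handling of the precompact-versus-compact distinction. The symmetry between the $E$-side conditions $(b),(d)$ and the $E^{b}$-side conditions $(c),(e)$ is imperfect: on $E^{b}$ the Alaoglu--Bourbaki theorem upgrades $\mathcal{T}^{+}$-equicontinuity to genuine $\sigma(E^{b},E)$-compactness at no cost, whereas on $E$ a $\mathcal{T}$-precompact set is merely weakly bounded and need not be relatively weakly compact, so all the compactness must be transported to the $E^{b}$-side, where Ascoli and the compact-to-Hausdorff principle can act. I will also have to check the subsidiary points that the $\sigma(E^{b},E)$-closure of a $\mathcal{T}$-limited set is again $\mathcal{T}$-limited and that $\{p\le 1\}$ is $\sigma(E,E^{+})$-closed, so that the bipolar identity $p=\sup_{f\in L}\lvert f\rvert$ is legitimate; these are what keep $(c)$, read as relative $\mathcal{T}^{0}$-compactness, compatible with the closed form of $L$ used in $(e)$.
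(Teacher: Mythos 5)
The paper does not prove this proposition at all: it is stated as a known result of J.~H.~Webb (Proc.\ Cambridge Philos.\ Soc.\ \textbf{64} (1968)) and left as a citation, so there is no in-paper argument to compare against. Your proof is, as far as I can check, correct and essentially reconstructs the standard route: the two ``dictionary'' entries (a seminorm is $\mathcal{T}^{+}$-continuous iff it vanishes along $\mathcal{T}$-null sequences, and $\mathcal{T}^{p}$-continuous iff every $\mathcal{T}$-precompact set is totally bounded for it) are exactly Webb's Propositions 1.1--1.3 and the supremum-of-topologies argument behind them is sound; the polarity between $\mathcal{T}$-limited subsets of $E^{b}$ and $\mathcal{T}^{+}$-neighbourhoods, together with Alaoglu--Bourbaki, correctly carries the compactness to the $E^{b}$-side; the $\varepsilon/3$ estimate for $(b)\Rightarrow(e)$ and the Ascoli (Grothendieck interchange) argument for $(c)\Rightarrow(a)$ both go through, including the needed facts that $(L,\mathcal{T}^{0})$ is compact Hausdorff (since $L=U^{\circ}$ is $\sigma(E^{b},E)$-closed and $\sigma(E^{b},E)\preceq\mathcal{T}^{0}$) and that $K$ acts as an equicontinuous, pointwise bounded family on it. Two points you flag but defer should be written out: that the $\sigma(E^{b},E)$-closure of a $\mathcal{T}$-limited set is again limited (immediate from $\overline{L}\subseteq L^{\circ\circ}=V^{\circ}$ for a $\mathcal{T}^{+}$-neighbourhood $V$), and that in the total-boundedness transfers the centres can be moved into the set itself at the cost of halving the neighbourhood. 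Note also that condition $(c)$ as printed (``$\mathcal{T}^{0}$-compact'') can only mean relative compactness, or must be read for closed limited sets, since a limited set need not be closed; your reading is the right one and is consistent with the cycle you run.
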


\
\begin{theorem} (The Grothendieck Completeness Theorem) Let $(E,\mathcal{T})$ be a l.c.s. and let $\mathfrak{S}$ be a saturated family of bounded sets covering $E$. Then $E'$ is complete under the $\mathfrak{S}$-topology iff every $f\in E^{\#}$ which is $\mathcal{T}$-continuous on each $S\in \mathfrak{S}$, is continuous on $(E,\mathcal{T})$
\end{theorem}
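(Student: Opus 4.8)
The plan is to identify the completion of $(E',\tau_{\mathfrak S})$ explicitly, where $\tau_{\mathfrak S}$ denotes the $\mathfrak S$-topology of uniform convergence on the members of $\mathfrak S$, and then to read the stated criterion off that identification. I would regard $E'$ as a subspace of the algebraic dual $E^{\#}$. Since $\mathfrak S$ covers $E$, every point of $E$ lies in some $S\in\mathfrak S$, so $\tau_{\mathfrak S}$ is finer than the topology $\sigma(E',E)$ of pointwise convergence; in particular $\tau_{\mathfrak S}$ is Hausdorff and every $\tau_{\mathfrak S}$-Cauchy net in $E'$ is pointwise Cauchy. Set
\[ G_{0}=\{f\in E^{\#}: f|_{S}\text{ is }\mathcal T\text{-continuous for every }S\in\mathfrak S\}. \]
The goal is to show that the completion of $(E',\tau_{\mathfrak S})$ is exactly $G_{0}$ carried with the extended $\mathfrak S$-topology. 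Granting this, the theorem is immediate: $E'$ is $\tau_{\mathfrak S}$-complete iff $E'=G_{0}$, i.e. iff every $f\in E^{\#}$ that is $\mathcal T$-continuous on each $S\in\mathfrak S$ already belongs to $E'=(E,\mathcal T)'$, which is precisely the asserted condition.

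First I would dispose of the routine half. To see that $(G_{0},\tau_{\mathfrak S})$ is complete, take a $\tau_{\mathfrak S}$-Cauchy net; it is uniformly Cauchy on each $S\in\mathfrak S$ and hence, as $\mathfrak S$ covers $E$, converges pointwise to a linear form $f$, with the convergence uniform on each $S$. Then $f|_{S}$ is a uniform limit of the $\mathcal T$-continuous functions $g|_{S}$ ($g\in E'$) and is therefore itself $\mathcal T$-continuous, so $f\in G_{0}$. The same remark shows that every element of the completion restricts $\mathcal T$-continuously to each $S$, so the completion embeds in $G_{0}$. Throughout I would use the saturation of $\mathfrak S$ to reduce each neighborhood computation to a single absolutely convex, $\sigma(E,E')$-closed member $S$ at a time, since a basic $\tau_{\mathfrak S}$-neighborhood involving finitely many sets is controlled by the one member of $\mathfrak S$ dominating their union.

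The substance of the argument is the \emph{density} of $E'$ in $(G_{0},\tau_{\mathfrak S})$, and this is where I expect the main difficulty. Fix $f\in G_{0}$, a member $S\in\mathfrak S$ (taken absolutely convex and $\sigma(E,E')$-closed) and $\varepsilon>0$; the goal is a genuine continuous linear functional $g\in E'$ with $\sup_{x\in S}|f(x)-g(x)|\le\varepsilon$. The natural route is a separation argument in the Banach space $\ell^{\infty}(S)$. The restrictions $E'|_{S}$ form a linear subspace, and if no such $g$ existed, then $f|_{S}$ would lie outside the convex set $E'|_{S}+\varepsilon B_{\ell^{\infty}(S)}$, so by Hahn--Banach some $\mu\in\ell^{\infty}(S)^{*}$ would satisfy $\mu\perp E'|_{S}$ together with $\operatorname{Re}\mu(f|_{S})\ge\varepsilon\|\mu\|>0$. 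The obstacle is the implication $\mu\perp E'|_{S}\Rightarrow\mu(f|_{S})=0$: when $S$ is relatively $\sigma(E,E')$-compact this is immediate, because the finitely additive $\mu$ then has a barycenter in $E$, which $\mu\perp E'|_{S}$ forces to be $0$, so $\mu(f|_{S})=f(0)=0$; but in general $S$ carries no such compactness, $\mu$ is only finitely additive, and no barycenter need exist. It is exactly here that the hypothesis that $f$ is \emph{continuous} on $S$, rather than merely bounded, must be exploited to recover $\mu(f|_{S})=0$, and I expect this implication to be the technical heart of the proof.

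With completeness of $G_{0}$ and density of $E'$ both established, the completion of $(E',\tau_{\mathfrak S})$ is $G_{0}$, and the equivalence in the statement follows at once: $E'$ is $\mathfrak S$-complete precisely when it already contains every linear form that is $\mathcal T$-continuous on each $S\in\mathfrak S$, that is, precisely when $\mathcal T$-continuity on every member of $\mathfrak S$ forces $\mathcal T$-continuity on all of $E$.
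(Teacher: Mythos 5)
The paper does not prove this theorem; it is quoted as a classical result with a reference to Schaefer, so your proposal can only be judged on its own terms. Your overall architecture is the standard one: identify the completion of $(E',\tau_{\mathfrak S})$ with $G_{0}$, the space of linear forms whose restriction to each $S\in\mathfrak S$ is $\mathcal T$-continuous, and read the completeness criterion off that identification. The completeness of $(G_{0},\tau_{\mathfrak S})$ is handled correctly. But the density of $E'$ in $G_{0}$ --- which you yourself flag as ``the technical heart'' --- is left genuinely open, and the route you sketch for it cannot work as stated. Asking that every $\mu\in\ell^{\infty}(S)^{*}$ with $\mu\perp E'|_{S}$ also annihilate $f|_{S}$ is, by Hahn--Banach (the weak and norm closures of the subspace $E'|_{S}$ coincide), \emph{equivalent} to the assertion that $f|_{S}$ lies in the $\|\cdot\|_{\infty}$-closure of $E'|_{S}$ --- which is exactly the density statement you are trying to prove. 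The separation detour is circular, and the barycenter difficulty you identify is not a technical wrinkle to be smoothed over but a sign that this is the wrong mechanism: $\mu$ sees $S$ only as a discrete set and carries no information about the topology in which $f|_{S}$ is continuous.

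The classical argument closes this gap with a polar computation rather than a separation in $\ell^{\infty}(S)$. Fix $f\in G_{0}$, $S\in\mathfrak S$ absolutely convex and closed, and $\varepsilon>0$. Since $f|_{S}$ is $\mathcal T$-continuous at $0$ and $f(0)=0$, there is a closed absolutely convex $0$-neighborhood $U$ in $(E,\mathcal T)$ with $|f|\le\varepsilon$ on $S\cap U$, i.e.\ $f\in\varepsilon\,(S\cap U)^{\circ}$, the polar taken in the algebraic dual $E^{\#}$ with the topology $\sigma(E^{\#},E)$. The bipolar theorem gives $(S\cap U)^{\circ}$ as the $\sigma(E^{\#},E)$-closed absolutely convex hull of $S^{\circ}\cup U^{\circ}$; since $U^{\circ}$ is $\sigma(E^{\#},E)$-compact by Alaoglu and $S^{\circ}$ is closed, the set $S^{\circ}+U^{\circ}$ is already closed and contains that hull, so $f=\varepsilon g+\varepsilon h$ with $g\in S^{\circ}$ and $h\in U^{\circ}\subseteq E'$. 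Then $x'=\varepsilon h\in E'$ satisfies $\sup_{x\in S}|f(x)-x'(x)|=\varepsilon\sup_{x\in S}|g(x)|\le\varepsilon$, which is the density you need. Note that it is precisely the compactness of $U^{\circ}$ --- available because $U$ is a genuine $\mathcal T$-neighborhood of $0$, i.e.\ because $f$ is \emph{continuous} on $S$ and not merely bounded there --- that replaces the nonexistent barycenter in your approach.
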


\begin{corollary} A l.c.s. $(E,\mathcal{T})$ is complete iff every $g\in (E')^{\#}$ which is $\sigma (E',E)$-continuous on every equicontinuous subset of $E'$ is $\sigma (E',E)$-continuous on all of $E'$. 
\end{corollary}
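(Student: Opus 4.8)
The plan is to deduce this from the Grothendieck Completeness Theorem by applying it not to $(E,\mathcal{T})$ itself, but to its weak$^{*}$ dual $F=(E',\sigma(E',E))$, whose topological dual is $F'=E$. The guiding observation is that $E$ sits naturally as the dual of $F$, and that the original topology $\mathcal{T}$ is recoverable on $E=F'$ as a polar topology of the dual pair $\langle E',E\rangle$.

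First I would choose $\mathfrak{S}$ to be the family of all (closed, absolutely convex) equicontinuous subsets of $E'$, and check that it meets the hypotheses of the theorem relative to $F$. Each equicontinuous set is $\sigma(E',E)$-bounded (indeed relatively weak$^{*}$ compact by Alaoglu--Bourbaki); the family covers $E'$, since for every $f\in E'$ the singleton $\{f\}$ is equicontinuous; and it is saturated, being closed under subsets, scalar multiples, finite unions, and the passage to closed absolutely convex hulls (the last being a standard stability property of equicontinuous sets).

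Next comes the key identification of topologies: the $\mathfrak{S}$-topology induced on $F'=E$, namely uniform convergence on the equicontinuous subsets of $E'$, coincides with the original topology $\mathcal{T}$. This is because the polars of a neighborhood base at $0$ for $\mathcal{T}$ form a fundamental system of equicontinuous subsets of $E'$, and conversely the polars of the equicontinuous sets furnish a $\mathcal{T}$-neighborhood base at $0$. Consequently the statement ``$E=F'$ is complete under the $\mathfrak{S}$-topology'' is literally the statement ``$(E,\mathcal{T})$ is complete.''

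Finally I would invoke the Grothendieck Completeness Theorem with base space $F=(E',\sigma(E',E))$, dual $F'=E$, and algebraic dual $F^{\#}=(E')^{\#}$. In these roles the theorem asserts that $E$ is complete in the $\mathfrak{S}$-topology if and only if every $g\in (E')^{\#}$ that is $\sigma(E',E)$-continuous on each equicontinuous subset of $E'$ is $\sigma(E',E)$-continuous on all of $E'$; combined with the identification above, this is exactly the claimed equivalence. The main obstacle is precisely this interchange of roles together with the verification that the $\mathfrak{S}$-topology on $E$ reproduces $\mathcal{T}$ — that is, recognizing $E$ as the dual of its own weak$^{*}$ dual and that the equicontinuous subsets of $E'$ generate $\mathcal{T}$ by polarity.
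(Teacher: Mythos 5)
Your argument is correct and is exactly the derivation the paper intends: Corollary 2.7 is stated there without proof, as an immediate consequence of the Grothendieck Completeness Theorem applied to the weak$^{*}$ dual $(E',\sigma(E',E))$ with $\mathfrak{S}$ the saturated covering family of equicontinuous subsets of $E'$, whose $\mathfrak{S}$-topology on $E=(E',\sigma(E',E))'$ recovers $\mathcal{T}$ by polarity. Your verification of boundedness, covering, saturation, and the identification of the $\mathfrak{S}$-topology with $\mathcal{T}$ supplies precisely the details the paper omits; there are no gaps.
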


\begin{lemma}
Let $E = X^{*}$ and $\tau = \tau (X^{*}, X)$ in the preceding definitions. Then both $(X^{*},\tau ^{+})$ and ($X^{*},\tau ^{p})$ are complete locally convex spaces.
\end{lemma}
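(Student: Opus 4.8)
The plan is to apply the Corollary to the Grothendieck Completeness Theorem to each of the two spaces, so the first task is to identify their topological duals. Since $\tau = \tau(X^{*},X)$ is compatible with the pairing $\langle X^{*},X\rangle$, its bounded sets are exactly the $\sigma(X^{*},X)$-bounded sets, and by the uniform boundedness principle these coincide with the norm-bounded subsets of $X^{*}$. Hence a linear form on $X^{*}$ is bounded on every $\tau$-bounded set precisely when it is bounded on $B_{X^{*}}$, i.e. when it is norm continuous; therefore $E^{b}=X^{**}$. By the preceding Proposition, $(X^{*},\tau^{p})' = E^{b} = X^{**}$ and $(X^{*},\tau^{+})' = E^{+}\subseteq E^{b} = X^{**}$, and in both cases $\tau\preceq\tau^{p},\tau^{+}\preceq\tau^{b}=\tau(X^{*},X^{**})=\|\cdot\|$, so that $\tau^{p}$- and $\tau^{+}$-bounded sets are again exactly the norm-bounded sets.

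Writing $\rho$ for either $\tau^{p}$ or $\tau^{+}$, the Corollary reduces completeness of $(X^{*},\rho)$ to the following assertion: every $g$ in the algebraic dual of $(X^{*},\rho)'$ which is $\sigma((X^{*},\rho)',X^{*})$-continuous on each $\rho$-equicontinuous subset of $(X^{*},\rho)'$ is $\sigma((X^{*},\rho)',X^{*})$-continuous on all of $(X^{*},\rho)'$, that is, is evaluation at a point of $X^{*}$. By the Alaoglu--Bourbaki theorem each $\rho$-equicontinuous set is relatively $\sigma((X^{*},\rho)',X^{*})$-compact, so the family being tested consists of weak-$*$ compact sets. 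To exploit this I would describe these equicontinuous families through the neighbourhood bases at $0$: a basic $\tau^{p}$-neighbourhood is an absolutely convex set $U$ such that, for every $\tau$-precompact $K\subseteq X^{*}$, there is a $\tau$-neighbourhood $V$ with $K\cap V\subseteq U$ (equivalently, $\tau^{p}$ agrees with $\tau$ on $\tau$-precompact sets), while a basic $\tau^{+}$-neighbourhood is an absolutely convex set absorbing every $\tau$-null sequence. The weak-$*$ polars of such sets are the equicontinuous families against which $g$ must be tested.

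The main obstacle is precisely the verification of this Grothendieck continuity condition for the finer topologies. The difficulty is that the $\tau^{p}$- and $\tau^{+}$-equicontinuous subsets of $X^{**}$ are in general strictly smaller than the norm-equicontinuous (that is, norm-bounded) sets, so one cannot merely observe that $g$ is weak-$*$ continuous on $B_{X^{**}}$ and quote the Krein--\v{S}mulian theorem to conclude $g\in X^{*}$. The crux is therefore to show that testing weak-$*$ continuity against this a priori meagre family of weak-$*$ compact sets already forces global weak-$*$ continuity on the bidual. For $\tau^{p}$ this should follow fairly cleanly, since $\tau^{p}$ agrees with $\tau$ on the $\tau$-precompact sets and one can feed the weak-$*$ compactness of the equicontinuous polars into a Krein--\v{S}mulian type argument to pass from continuity on these sets to continuity on bounded sets.

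The $\tau^{+}$ case I expect to be the genuinely delicate one, because its defining property is sequential: a $\tau^{+}$-Cauchy filter could a priori acquire limits that an elementary net-or-sequence argument does not control, and one must rule these out. This is exactly the situation in which the Completeness Theorem, rather than a bare Cauchy-net computation, is indispensable, since it converts the completeness question into the closure-free continuity condition above. I would accordingly spend the bulk of the effort pinning down the $\tau^{+}$-equicontinuous sets in $E^{+}$ and checking that weak-$*$ continuity on them propagates to all of $E^{+}$; the fact that $E^{+}\subseteq X^{**}$ together with the weak-$*$ compactness supplied by Alaoglu--Bourbaki is what should make the argument close.
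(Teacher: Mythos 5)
Your setup coincides with the paper's: identify $(X^{*},\tau^{p})' = (X^{*})^{b} = X^{**}$ and $(X^{*},\tau^{+})' = (X^{*})^{+}$, and reduce completeness to Grothendieck's criterion that every linear form on the dual which is $\sigma$-continuous on each equicontinuous set is globally $\sigma$-continuous. But the proposal stops exactly where the proof has to begin: you label the verification of that continuity condition ``the main obstacle,'' say the $\tau^{p}$ case ``should follow fairly cleanly,'' and describe where you ``would spend the bulk of the effort'' on the $\tau^{+}$ case, without carrying out either verification. As written this is a plan rather than a proof, and the plan points in a direction --- cataloguing the polars of $\tau^{p}$- and $\tau^{+}$-neighbourhoods as weak-$*$ compact sets and running ``a Krein--\v{S}mulian type argument'' --- that is not how the verification actually closes. (You also invert the expected difficulty: in the paper the two cases are handled by the same two-line mechanism, with $\tau^{p}$ requiring the extra input, namely the Grothendieck interchange lemma.)

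The missing idea is to go the opposite way: rather than characterizing \emph{all} equicontinuous sets, exhibit a small subfamily of them on which Grothendieck's hypothesis already forces the conclusion. By Webb's identifications, the $\tau^{+}$-equicontinuous subsets of $(X^{*})^{+}$ are the $\tau$-limited sets and the $\tau^{p}$-equicontinuous subsets of $(X^{*})^{b}=X^{**}$ are the $\tau^{0}$-precompact sets, and in both cases every $\sigma(X,X^{*})$-compact subset of $X$ belongs to the family --- for $\tau^{+}$ because weakly compact subsets of $X$ are $\tau$-equicontinuous by the very definition of the Mackey topology, and for $\tau^{p}$ by the Grothendieck interchange lemma. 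In particular the range of any norm-null sequence $\{x_{n}\}$ in $X$ is such a set, so a linear form $f$ satisfying the hypothesis must have $f(x_{n})\rightarrow 0$ for every norm-null sequence in $X$; hence $f$ restricted to $X$ is bounded and $f\in X^{*}$, which is exactly the global $\sigma$-continuity Grothendieck's criterion demands. Without isolating the weakly compact subsets of $X$ inside the equicontinuous family and feeding norm-null sequences into them, the argument does not close; that single observation is what your proposal lacks.
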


\begin{proof}
We first note that $(X^{*}, \tau ^{+})' = (X^{*})^{+}$ \cite[p. 344]{WJ} and $(X^{*},\tau ^{p})' = (X^{*})^{b} = X{**}$ \cite[p. 349]{WJ}. We apply Grothendieck's Completeness Theorem \cite[p. 149]{SH} to establish the lemma. To that end, let $f\in ((X^{*})^{+})^{\#}$ be $\sigma ((X^{*})^{+},X^{*})$-continuous on every equicontinuos subset of $(X^{*})^{+}$ (i.e., on the $\tau$-limited subsets of $(X^{*})^{+}$ by \cite[Proposition 1.3, p. 343]{WJ}). By the definition of the Mackey topology $\tau$, every $\sigma (X,X^{*})$-compact subset $X$ is $\tau$-limited. Hence $f$ is $\sigma ((X^{*})^{+},X^{*})$-continuous on each $\sigma (X,X^{*})$-compact subset of $X$. Since $\sigma (X,X^{*})$ is the subspace topology induced on $X$ by the topology $\sigma ((X^{*})^{+},X^{*})$, we deduce that $f$ is  $\sigma (X,X^{*})$-continuous on each $\sigma (X,X^{*})$-compact subset of $X$. Now the range of a norm-null sequence in $X$ is $\sigma (X,X^{*})$-compact and so $\lim_{n\rightarrow\infty} f(x_{n}) = 0$ for every norm-null sequence $\{x_{n}\}_{n=1}^{\infty}$ in $X$. Hence $f\in X^{*}$. This completes the proof that $((X^{*})^{+},\tau^{+})$ is a complete locally convex space.

\

To prove that $(X^{*},\tau ^{p})$ is complete, we also apply Grothendieck's Completeness Theorem. To that end, let $g\in ((X^{*})^{b})^{\#}$ be  $\sigma ((X^{*})^{b},X^{*})$-continuous on every equicontinuos subset of $(X^{*})^{b}$ (i.e., on the $\tau^{0}$-precompact subsets of $(X^{*})^{b}$ by \cite[Proposition 2.4, page 349]{WJ}). Since $X\subset (X^{*})^{b} =  X^{**}$ and $X$ is $\sigma (X^{**},X^{*})$ dense in $X^{**}$, it follows that the canonical bilinear form of the duality $<X^{*},(X^{*})^{b}>$ places $X^{*}$ and $X$ in duality (see \cite[Ch. IV, 1.3, page 125]{SH}). The Grothendieck Interchange Lemma \cite[Corollary 4, page 93]{GA2}, says that every $\sigma (X,X^{*})$-compact subset of $X$ is $\tau^{0}$-precompact. Hence every $\sigma (X,X^{*})$-compact subset of $X$ is an equicontinuous subset of $(X^{*})^{b}$. So $g$ is $\sigma ((X^{*})^{b}, X^{*})$-continuous on every $\sigma (X,X^{*})$-compact subset of $X$. By the same argument used in the preceding paragraph, $\lim_{n\rightarrow\infty} g(x_{n}) = 0$ for every norm-null sequence $\{x_{n}\}_{n=1}^{\infty}$ in $X$. So $g\in X^{*}$. This completes the proof that $(X^{*},\tau ^{p})$ is a complete locally convex space.
\end{proof}

\section{THE MAIN RESULT}

Our main result builds on some well known results, which we now state. We first define some concepts we will need.

\begin{definition}: A continuous linear operator $T: X\longrightarrow Y$ is said to be a Dunford-Pettis operator (or completely continuous operator) iff it maps weakly compact sets to norm compact sets. A Banach space $X$ has the Dunford-Pettis Property (DP) if every weakly compact operator $T: X\longrightarrow Y$ is a Dunford-Pettis operator. Similarly, $X$ has the reciprocal Dunford-Pettis property (RDP) iff every Dunford-Pettis operator is weakly compact. If $K$ is a compact Hausdorff space, then $C(K)$ has both DP and RDP.
\end{definition}

The following characterization of sequentially reflexive Banach spaces was proved independently by P. Orno \cite{OP} and M. Valdivia \cite{VM93}.

\begin{theorem} 
A Banach space $X$ is sequentially reflexive iff it does not contain an isomorphic copy of $\ell_{1}$
\end{theorem}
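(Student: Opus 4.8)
The plan is to prove the two implications separately, reformulating non-sequential-reflexivity as the existence of a sequence $\{f_n\}\subset X^*$ with $f_n\to 0$ in $\tau(X^*,X)$ but $\inf_n\|f_n\|>0$. Throughout I will use two elementary facts about such a sequence: since $\tau(X^*,X)$ is finer than $\sigma(X^*,X)$, a $\tau(X^*,X)$-null sequence is automatically $\sigma(X^*,X)$-null (weak${}^*$-null); and since $\tau(X^*,X)$ is compatible with the duality $\langle X^*,X\rangle$, its bounded sets are the norm-bounded sets, so any $\tau(X^*,X)$-convergent sequence is norm-bounded. A convenient reduction, proved by Eberlein--\v{S}mulian together with the Krein theorem quoted in Section~2, is that for a norm-bounded $\{f_n\}\subset X^*$ the following are equivalent: (i) $f_n\to 0$ in $\tau(X^*,X)$; (ii) $f_n\to 0$ weak${}^*$ \emph{and} $f_n(v_n)\to 0$ for every weakly null sequence $\{v_n\}$ in $X$. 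Indeed, if (ii) holds and $\sup_{x\in K}|f_n(x)|\not\to 0$ for some weakly compact absolutely convex $K$, choose $x^{(k)}\in K$ with $|f_{n_k}(x^{(k)})|\ge\delta$; by Eberlein--\v{S}mulian pass to $x^{(k_j)}\rightharpoonup x_0\in K$, and then $f_{n_{k_j}}(x^{(k_j)})=f_{n_{k_j}}(x^{(k_j)}-x_0)+f_{n_{k_j}}(x_0)\to 0$, a contradiction.

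For the implication ``$X$ contains no copy of $\ell_1 \Rightarrow X$ sequentially reflexive,'' I argue by contradiction. Suppose $\{f_n\}$ witnesses non-sequential-reflexivity; passing to a subsequence I may assume $\|f_n\|\ge 2\delta$, and I choose $x_n\in B_X$ with $f_n(x_n)>\delta$ (adjusting signs). Since $X\not\supseteq\ell_1$, Rosenthal's $\ell_1$ theorem \cite{RH} gives a weakly Cauchy subsequence; relabel so that $\{x_n\}$ is weakly Cauchy, $g_k:=f_{n_k}$, $z_k:=x_{n_k}$, $g_k(z_k)>\delta$, and $g_k\to 0$ both in $\tau(X^*,X)$ and weak${}^*$. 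The consecutive differences $u_k:=z_k-z_{k-1}$ are weakly null, so $\{u_k\}\cup\{0\}$ is weakly compact and its closed absolutely convex hull is weakly compact by Krein; hence $\eta_k:=\sup_j|g_k(u_j)|\to 0$. Using that $g_k(z_\ell)\to 0$ as $k\to\infty$ for each fixed $\ell$ (weak${}^*$-nullity), I extract a further subsequence, recursively chosen so that $|g_k(z_{k-1})|<2^{-k}$; this preserves weak Cauchyness, $\tau$-nullity, and the conclusion $\eta_k\to 0$ for the new differences. The contradiction is then a one-step telescoping: $\delta<g_k(z_k)=g_k(z_{k-1})+g_k(u_k)$, whence $\delta<2^{-k}+\eta_k\to 0$.

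For the converse, ``$X$ contains $\ell_1 \Rightarrow X$ not sequentially reflexive,'' I produce a sequence satisfying (ii) above with $\inf_n\|f_n\|>0$. Fix an $\ell_1$-basis $\{y_n\}$ with $c\sum|a_n|\le\|\sum a_n y_n\|\le C\sum|a_n|$, and let $\phi_n$ be its biorthogonal coordinate functionals on $Y:=\overline{\mathrm{span}}\{y_n\}\cong\ell_1$, so $\|\phi_n\|\le 1/c$ and $\phi_n\to 0$ weak${}^*$ in $Y^*$. When $Y$ is complemented by a projection $P:X\to Y$, the choice $f_n:=\phi_n\circ P$ settles everything cleanly: $f_n(x)=\phi_n(Px)\to 0$ for each fixed $x$ (a coordinate of the fixed element $Px\in\ell_1$), and for weakly null $v_n$ we have $Pv_n\rightharpoonup 0$, hence $\|Pv_n\|\to 0$ by the Schur property of $\ell_1$, so $|f_n(v_n)|\le(1/c)\|Pv_n\|\to 0$; thus (ii) holds and $f_n$ is $\tau(X^*,X)$-null, while $f_n(y_n)=1$ forces $\|f_n\|\ge 1/C>0$.

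The main obstacle is exactly the non-complemented case, where an $\ell_1$-copy need carry no bounded projection and the Schur-property shortcut is unavailable: an arbitrary Hahn--Banach extension of $\phi_n$ need be neither weak${}^*$-null nor annihilating on weakly null sequences. The plan here is to choose the extensions by hand. I would first reduce to a separable situation by replacing $X$ with a separable subspace containing $Y$ and controlling the weakly compact test sets, and then select extensions $f_n\in X^*$ by a gliding-hump/diagonal argument driven by Rosenthal's theorem: using that $\{y_n\}$ has no weakly Cauchy subsequence, I would simultaneously enforce $f_n(y_n)=1$, weak${}^*$-nullity against a countable norming family, and $f_n(v_n)\to 0$ along weakly null sequences, so that condition (ii) is met. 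Verifying that a single such selection can be made to satisfy all three requirements at once is the delicate point on which the whole converse turns.
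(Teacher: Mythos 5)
The paper does not prove this theorem at all --- it is quoted from Orno \cite{OP} and Valdivia \cite{VM93} --- so your attempt can only be judged on its own. Your first implication (no copy of $\ell_1$ $\Rightarrow$ sequentially reflexive) is correct and is essentially the standard argument: the reduction of $\tau(X^{*},X)$-nullity of a bounded sequence to ``weak$^{*}$-null plus $f_n(v_n)\to 0$ for weakly null $(v_n)$'' via Eberlein--\v{S}mulian and Krein is sound, and the Rosenthal-plus-telescoping contradiction goes through (the only fussy point, that the hypothesis quantifies over sequences indexed by all of $\mathbb{N}$ while you apply it along a subsequence, is handled by padding with zeros). This is the direction the paper's introduction alludes to when it says the arguments ``lean heavily on Rosenthal's $\ell_1$ theorem.''

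The converse, however, has a genuine gap, and you have correctly located it yourself: you only prove that $X$ fails sequential reflexivity when the copy of $\ell_1$ is \emph{complemented}, and that case is the exception rather than the rule ($\ell_1$ sits uncomplemented in $C[0,1]$ and in $\ell_\infty$, for instance, since every infinite-dimensional complemented subspace of a $C(K)$-space contains $c_0$). In the uncomplemented case the Schur-property shortcut disappears, and the plan you sketch will not close the hole as stated: a gliding-hump selection of Hahn--Banach extensions against a \emph{countable} norming family can enforce weak$^{*}$-nullity, but the L-set condition ``$f_n(v_n)\to 0$ for every weakly null sequence $(v_n)$ in $X$'' is a family of constraints indexed by all weakly null sequences of $X$, not just those living in $Y$ or in a countable set, and no diagonalization over a countable family controls it. The proposed ``reduction to a separable subspace'' is also circular in spirit: transferring a Mackey-null sequence from $Z^{*}$ back to $X^{*}$ is exactly the same extension problem you started with (equivalently, it amounts to knowing that sequential reflexivity is separably determined, which is essentially the theorem being proved). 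Closing this direction requires a genuinely nontrivial input --- e.g.\ Pe\l czy\'nski's theorem that $\ell_1\hookrightarrow X$ forces $L_1[0,1]$ (indeed $C[0,1]^{*}$) to embed in $X^{*}$ in a way that lets the Rademacher functions supply a weak$^{*}$-null, norm-bounded-below L-set, which is how Emmanuele's equivalence \cite{EG} and Valdivia's argument proceed --- and none of that is present in the proposal.
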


Our main result will lean heavily on G. Emmanuele's characterization of Banach spaces which do not contain an isomorphic copy $\ell_{1}$ \cite{EG}

\begin{theorem} [G. Emmanuele - 1986]
Let $X$ be a Banach space. The following conditions on $X$ are equivalent:
\begin{description}
\item[(a)] Every Dunford-Pettis operator is $T: X \rightarrow Y$ is compact.
\item[(b)] Every $\tau (X^{*},X)$-compact subset of $X^{*}$ is norm-compact.
\item[(c)] $X$ contains no isormophic copy of $\ell_{1}$.
\end{description}
\end{theorem}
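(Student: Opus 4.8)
The plan is to prove the cycle of implications (a) $\Rightarrow$ (b) $\Rightarrow$ (c) $\Rightarrow$ (a), using only Rosenthal's $\ell_{1}$ theorem and the Orno--Valdivia characterization (the theorem stated just above) as external inputs. The organizing notion is that of a \emph{Dunford--Pettis subset} of $X^{*}$: a bounded set $H\subseteq X^{*}$ with $\sup_{f\in H}|f(x_{n})|\to 0$ for every $\sigma(X,X^{*})$-null sequence $(x_{n})$ in $X$. Two elementary correspondences drive the argument. First, a bounded $H$ is a Dunford--Pettis set exactly when the operator $T_{H}\colon X\to\ell_{\infty}(H)$, $T_{H}x=(f(x))_{f\in H}$, is a Dunford--Pettis operator, and $H$ is relatively norm compact exactly when $T_{H}$ is compact. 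Second, every $\tau(X^{*},X)$-compact set is automatically a Dunford--Pettis set.

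For (c) $\Rightarrow$ (a), let $T\colon X\to Y$ be a Dunford--Pettis operator and let $(x_{n})$ be a bounded sequence in $X$. Since $X$ contains no copy of $\ell_{1}$, Rosenthal's theorem yields a weakly Cauchy subsequence $(x_{n_{k}})$. As $j,k\to\infty$ the differences $x_{n_{k}}-x_{n_{j}}$ are $\sigma(X,X^{*})$-null, so $\|Tx_{n_{k}}-Tx_{n_{j}}\|\to 0$ because $T$ is Dunford--Pettis; thus $(Tx_{n_{k}})$ is norm Cauchy. Hence $T(B_{X})$ is relatively norm compact and $T$ is compact.

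For (a) $\Rightarrow$ (b), let $K\subseteq X^{*}$ be $\tau(X^{*},X)$-compact and $(x_{n})$ a $\sigma(X,X^{*})$-null sequence. The set $W=\{x_{n}\}\cup\{0\}$ is weakly compact, so $f\mapsto(f(x_{n}))_{n}$ is a continuous map from $(K,\tau(X^{*},X))$ into $c_{0}$, its oscillation being controlled by the Mackey seminorm $\sup_{x\in W}|\cdot(x)|$; its image is norm compact in $c_{0}$, which forces $\sup_{f\in K}|f(x_{n})|\to 0$. Thus $K$ is a Dunford--Pettis set, so $T_{K}$ is a Dunford--Pettis operator, hence compact by (a), whence $K$ is relatively norm compact; being $\tau(X^{*},X)$-compact it is norm closed, hence norm compact. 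For (b) $\Rightarrow$ (c), take a $\tau(X^{*},X)$-convergent sequence $f_{n}\to f$ in $X^{*}$. The set $\{f_{n}\}\cup\{f\}$ is $\tau(X^{*},X)$-compact, hence norm compact by (b); since every norm-convergent subsequence has Mackey limit $f$, the whole sequence converges to $f$ in norm. Thus $X$ is sequentially reflexive, and the Orno--Valdivia theorem gives that $X$ contains no copy of $\ell_{1}$.

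The main obstacle is the precise verification of the operator--set correspondence underlying (a) $\Rightarrow$ (b). Computing $T_{H}^{*}$ on the coordinate evaluations gives $T_{H}^{*}e_{f}=f$, so that $H\subseteq T_{H}^{*}(B)$; the equivalence of ``$T_{H}$ compact'' with ``$H$ relatively norm compact'' then rests on Mazur's theorem that the closed absolutely convex hull of a norm-compact set is norm compact, together with the coincidence of the weak$^{*}$- and norm-closures of $\mathrm{aco}(H)$ when $H$ is relatively norm compact. The remaining steps are short: (c) $\Rightarrow$ (a) is a one-paragraph application of Rosenthal's theorem, and (b) $\Rightarrow$ (c) reduces immediately to the already-cited Orno--Valdivia result.
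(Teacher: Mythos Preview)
The paper does not supply a proof of this theorem: it is quoted as Emmanuele's 1986 result, with a citation to \cite{EG}, and used as a black box in the sequel. So there is no ``paper's own proof'' to compare against.

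Your argument is correct, and the cycle (a)$\Rightarrow$(b)$\Rightarrow$(c)$\Rightarrow$(a) is cleanly organized. The operator $T_{H}$ device is the standard bridge between statements about Dunford--Pettis subsets of $X^{*}$ and statements about Dunford--Pettis operators, and your verification that $\tau(X^{*},X)$-compact sets are Dunford--Pettis sets (via the continuous map $(K,\tau)\to c_{0}$ induced by a weakly null sequence) is exactly Grothendieck's characterization, rephrased. For (a)$\Rightarrow$(b) you can shorten the ``main obstacle'' paragraph: once $T_{K}$ is compact, so is $T_{K}^{*}$, and since each evaluation $e_{f}\in B_{\ell_{\infty}(K)^{*}}$ satisfies $T_{K}^{*}e_{f}=f$, the set $K$ sits inside the relatively norm-compact set $T_{K}^{*}(B_{\ell_{\infty}(K)^{*}})$; Mazur's theorem is not needed.

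One remark on (b)$\Rightarrow$(c): your route through sequential reflexivity and the Orno--Valdivia theorem is logically valid within the paper's framework (both results are stated as established prior to Theorem~3.3), but it is anachronistic relative to Emmanuele's 1986 paper, which predates Orno (1991) and Valdivia (1993). A self-contained alternative, closer in spirit to the original, is to argue the contrapositive directly: if $\ell_{1}\hookrightarrow X$, the summing basis of $c_{0}$ furnishes, via the quotient $X^{*}\twoheadrightarrow\ell_{\infty}$, a $\tau(X^{*},X)$-null (hence $\tau$-relatively compact) sequence in $X^{*}$ that is not norm-relatively compact. Either way, your proof stands.
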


We first prove a lemma.

\begin{lemma}
Let $X$ be a sequentially reflexive space and let $(X^{*},\tau (X^{*},X))$ be its Mackey dual. If $A\subset X^{*}$ is $\tau (X^{*},X)$-relatively compact, then no sequence in $A$ is equivalent to the $\ell_{1}$-basis
\end{lemma}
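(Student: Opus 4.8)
The plan is to reduce the statement to a purely metric observation about $\ell_1$-equivalent sequences, after first upgrading the $\tau(X^{*},X)$-relative compactness of $A$ to norm-relative compactness. First I would invoke the Orno--Valdivia theorem: since $X$ is sequentially reflexive, $X$ contains no isomorphic copy of $\ell_{1}$. This places $X$ under hypothesis (c) of Emmanuele's theorem, so condition (b) holds, namely every $\tau(X^{*},X)$-compact subset of $X^{*}$ is norm-compact.

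Next I would exploit the relative compactness hypothesis. Because $A$ is $\tau(X^{*},X)$-relatively compact, its $\tau(X^{*},X)$-closure $\overline{A}^{\tau}$ is $\tau(X^{*},X)$-compact, and by Emmanuele's equivalence it is therefore norm-compact. Since $A\subseteq\overline{A}^{\tau}$ and a norm-compact set is norm-closed, the norm-closure of $A$ sits inside the norm-compact set $\overline{A}^{\tau}$ and is thus itself norm-compact. Hence $A$ is relatively norm-compact, which means that every sequence in $A$ admits a norm-convergent subsequence.

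Finally I would derive the contradiction. Suppose, toward a contradiction, that some sequence $\{f_{n}\}$ in $A$ is equivalent to the $\ell_{1}$-basis. Then there is a constant $c>0$ with $\|\sum_{i}a_{i}f_{i}\|\geq c\sum_{i}|a_{i}|$ for all finitely supported scalar sequences $(a_{i})$; taking $a_{n}=1$, $a_{m}=-1$ and the remaining coordinates zero gives $\|f_{n}-f_{m}\|\geq 2c$ for all $n\neq m$. Such a sequence has no norm-Cauchy subsequence, hence no norm-convergent subsequence, contradicting the relative norm-compactness of $A$ established above. Therefore no sequence in $A$ can be equivalent to the $\ell_{1}$-basis.

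The argument is essentially a concatenation of the two cited characterizations, so no single step is a genuine obstacle; the one point demanding care is the passage from $\tau(X^{*},X)$-relative compactness to norm-relative compactness, where one must apply Emmanuele's result to the $\tau(X^{*},X)$-closure of $A$ rather than to $A$ itself, and then observe that norm-compactness of that closure transfers downward to $A$.
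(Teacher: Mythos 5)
Your proposal is correct and follows essentially the same route as the paper: Orno--Valdivia to conclude $X$ contains no copy of $\ell_{1}$, Emmanuele's theorem applied to $\overline{A}^{\tau(X^{*},X)}$ to get norm-compactness, and then the observation that an $\ell_{1}$-basis-equivalent sequence cannot be relatively norm-compact. The only difference is that you spell out the final separation estimate $\|f_{n}-f_{m}\|\geq 2c$, which the paper leaves as ``this is impossible.''
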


\begin{proof}
Assume $X$ is sequentially reflexive and that a $\tau (X^{*},X)$-relatively compact set $A\subset X^{*}$ contains a sequence $\{x_{n}^*\}$ equivalent to the canonical basis of $\ell_{1}$. Then the set $\overline{A}^{\tau (X^{*},X)}$ is $\tau (X^{*},X)$-compact. The space $X$ contains no isomorphic copy of $\ell_{1}$, so by the theorem, $\overline{A}^{\tau (X^{*},X)}$ is norm-compact. So $\{x_{n}^{*} : n\in\mathbb{N}\}$ is norm-relatively compact. This is impossible.
\end{proof}

\begin{corollary}
If $X$ does not contain an isomorphic copy of $\ell_1$ (equivalently, if $X$ is a sequentially reflexive Banach space), then the Mackey dual $(X^{*},\tau (X^{*},X))$ is an angelic space.
\end{corollary}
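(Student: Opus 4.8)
The plan is to verify directly the two defining properties of an angelic space for an arbitrary relatively countably compact set $A \subset (X^{*}, \tau (X^{*},X))$: that $A$ is relatively $\tau (X^{*},X)$-compact, and that every point of its closure is the $\tau (X^{*},X)$-limit of a sequence from $A$. First I would record the two facts that drive everything. Since $X$ contains no copy of $\ell_{1}$, Emmanuele's theorem gives that every $\tau (X^{*},X)$-compact subset of $X^{*}$ is norm-compact; as the norm topology is finer than $\tau (X^{*},X)$, the two classes of compact sets coincide, and on any such set the identity map from the norm topology (compact) to $\tau (X^{*},X)$ (Hausdorff) is a homeomorphism. Hence every $\tau (X^{*},X)$-compact set is norm-metrizable, with $\tau (X^{*},X)$ agreeing with the norm there. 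The second fact is the hypothesis itself, read off the definition of sequential reflexivity: every $\tau (X^{*},X)$-convergent sequence in $X^{*}$ is norm-convergent.

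Granting property (i), property (ii) is then immediate. If $A$ is relatively compact then $\overline{A}^{\tau}$ is $\tau (X^{*},X)$-compact, hence norm-compact and norm-metrizable, with $\tau (X^{*},X)$ equal to the norm on it; since $\overline{A}^{\tau} = \overline{A}^{\,\|\cdot\|}$ is then the closure of $A$ in a metric space, every point of it is the limit of a sequence from $A$. Thus the whole difficulty is concentrated in property (i): showing that a relatively countably compact set is relatively compact.

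For (i) I would establish the stronger statement that $A$ is norm-relatively compact, equivalently (as $X^{*}$ is complete) that every sequence in $A$ has a norm-Cauchy subsequence. Fix a sequence $\{x^{*}_{n}\} \subset A$, noting that $A$ is bounded, being countably compact. By Rosenthal's $\ell_{1}$ theorem, $\{x^{*}_{n}\}$ has a subsequence that is either equivalent to the $\ell_{1}$-basis or weakly Cauchy. The first alternative is to be excluded using relative countable compactness together with the preceding lemma and Emmanuele's theorem; the second yields a weak$^{*}$-Cauchy subsequence $\{y^{*}_{k}\}$ which, by relative countable compactness, has a $\tau (X^{*},X)$- (hence weak$^{*}$-) cluster point $z^{*} \in X^{*}$ and therefore weak$^{*}$-converges to $z^{*}$. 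The task is then to promote this to $\tau (X^{*},X)$-convergence $y^{*}_{k} \to z^{*}$; once that is secured, sequential reflexivity upgrades it to norm convergence, producing the required norm-Cauchy subsequence and establishing (i).

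The main obstacle is exactly this promotion, together with the related bootstrapping in excluding the $\ell_{1}$ alternative. The preceding lemma rules out $\ell_{1}$-sequences only inside sets already known to be relatively compact, whereas here one begins with mere relative countable compactness; and weak$^{*}$-convergence of $\{y^{*}_{k}\}$ does not by itself give uniform convergence on the weakly compact absolutely convex subsets of $X$ that define $\tau (X^{*},X)$. I would resolve both points at once by passing to the weak$^{*}$-compact closure $H = \overline{A}^{\,\sigma (X^{*},X)}$ and invoking Grothendieck's double-limit characterization of the $\tau (X^{*},X)$-compact subsets of $X^{*}$: the absence of $\ell_{1}$-sequences, forced by Rosenthal's dichotomy on the countably compact set together with the preceding lemma, is precisely what renders the iterated limits taken against weakly compact sequences in $X$ interchangeable, so that $H$, and hence the $\tau (X^{*},X)$-closed subset $\overline{A}^{\tau} \subseteq H$, is $\tau (X^{*},X)$-compact. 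Verifying this double-limit condition from the no-$\ell_{1}$ hypothesis is the technical heart of the argument; with it in place, $z^{*}$ becomes a genuine $\tau (X^{*},X)$-limit of $\{y^{*}_{k}\}$, and the chain through sequential reflexivity closes the proof.
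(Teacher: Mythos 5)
Your reduction of property (ii) to property (i) is fine, and the overall target---showing that $\tau(X^{*},X)$-relative countable compactness already forces norm relative compactness---is the right one. But the argument for (i) has a genuine gap, and you have in fact located it yourself: everything is made to rest on excluding the $\ell_{1}$ alternative of Rosenthal's dichotomy and on verifying Grothendieck's double-limit condition for $\overline{A}^{\sigma(X^{*},X)}$, and neither step is carried out. Worse, the exclusion as described is circular: the preceding lemma rules out $\ell_{1}$-sequences only in sets already known to be $\tau(X^{*},X)$-relatively \emph{compact}, which is precisely the conclusion you are trying to reach from mere relative countable compactness. The phrase ``the absence of $\ell_{1}$-sequences, forced by Rosenthal's dichotomy together with the preceding lemma'' therefore assumes what is to be proved, and ``verifying this double-limit condition from the no-$\ell_{1}$ hypothesis is the technical heart'' concedes that the heart is missing. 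Note also that the no-$\ell_{1}$ hypothesis alone cannot yield the double-limit condition for an arbitrary bounded set (the unit ball of $X^{*}$ is rarely $\tau(X^{*},X)$-compact), so the countable compactness of $A$ would have to enter that verification in an essential and unspecified way.

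The paper obtains (i) in one line by a different and much cheaper route: $(X^{*},\tau(X^{*},X))$ is a \emph{complete} locally convex space (by the Grothendieck completeness theorem, since a linear functional on $X$ that is weakly continuous on every absolutely convex weakly compact set sends norm-null sequences to null sequences and hence lies in $X^{*}$), and the Eberlein--Grothendieck theorem for spaces quasi-complete under their Mackey topology converts relative countable compactness directly into relative compactness; Emmanuele's theorem then finishes. If you want to salvage your sequential approach, the ``promotion'' step you worry about is actually elementary---if a sequence in a relatively countably compact set converges weak$^{*}$ to $z^{*}$, any subsequence escaping a $\tau$-neighborhood of $z^{*}$ would have a $\tau$-cluster point distinct from $z^{*}$, contradicting weak$^{*}$ convergence---and sequential reflexivity then upgrades this to norm convergence. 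But you would still owe a non-circular argument ruling out $\ell_{1}$-subsequences inside a merely countably compact set, and that is exactly the content which the appeal to Eberlein's theorem supplies for free.
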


\begin{proof}
Let $A\subset X^{*}$ be $\tau (X^{*},X)$-relatively countably compact. Then since $(X^{*},\tau (X^{*},X))$ is complete, Eberlein's theorem gives that $A$ is $\tau (X^{*},X)$-relatively compact, hence norm-relatively compact by the theorem. The result now follows trivially.
\end{proof}

{\bf Remark} It is easy to see that if $X$ is reflexive or separable, then $(X^{*},\tau(X^{*},X)$ is angelic (In the former case, $\tau (X^{*},X) = ||\cdot ||^{*}$, and in the latter case the $weak^{*}$ topology, $\sigma (X^{*},X)$, on $X^{*}$ is submetrizable. 

\

We now consider a condition on the dual pair $<X^{*},(X^{*})^{b}>$ which implies sequential reflexivity of the Banach space X. Following Webb \cite[p. 353]{WJ}, we make the following definition.

\begin{definition} A locally convex Hausdorff space $(E,T)$ is said to be sequentially barrelled if every $\sigma(E',E)$-null sequence in $E'$ is an $T$-equicontinuous set.
\end{definition}

Since the translate of an equicontinuous set is equicontinuous we obtain an equivalent definition if we replace the phrase "$\sigma(E',E)$-null sequence in $E'$" with the phrase "$\sigma(E',E)$-convergent sequence in $E'$" in the preceding definition. Also, since the class of $T$-equicontinuous sets in $E'$ is at least as large as the class of $\sigma(E',E)$-convergent sequences, $[\sigma(E',E)]^{n} \subset T$.

\begin{proposition} If $(E,T)$ is a sequentially barrelled space, then every $T$-precompact subset of $E$ is $\sigma (E',E)$-limited. Equivalently, the topologies $\sigma (E',E)$ and $T^{0}$ (the topology on $E^{b}$ of uniform convergence on the $T$-precompact subsets of $E$) have the same convergent sequences in $E'$. If, in addition, the dual pair $<E',E>$ satisfies Sm\'{u}lian's condition (i.e., every $\sigma (E',E)$-compact subset of $E'$ is $\sigma (E',E)$-sequentially compact), then every $\sigma (E',E)$-limited subset of $E$ is $T$-precompact  .
\end{proposition}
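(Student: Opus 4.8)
The statement has three parts, and I treat them in order, using throughout that a set $A\subseteq E$ is \emph{$\sigma(E',E)$-limited} when $\sup_{x\in A}|f_{n}(x)|\to 0$ for every $\sigma(E',E)$-null sequence $\{f_{n}\}$ in $E'$.

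For the first assertion my plan is to combine sequential barrelledness with the elementary fact that an equicontinuous, pointwise-null sequence of functionals converges uniformly on precompact sets. I would take $A$ to be $T$-precompact and $\{f_{n}\}\subseteq E'$ to be $\sigma(E',E)$-null; by the definition of sequential barrelledness $\{f_{n}\}$ is then $T$-equicontinuous, so for given $\varepsilon>0$ there is an absolutely convex $T$-neighbourhood $U$ of $0$ with $\sup_{n}\sup_{u\in U}|f_{n}(u)|\le\varepsilon$. Covering $A$ by finitely many translates $x_{1}+U,\dots,x_{r}+U$ and using $f_{n}(x_{i})\to 0$ at the finitely many centres, linearity gives $\sup_{x\in A}|f_{n}(x)|\le 2\varepsilon$ for all large $n$, so $A$ is $\sigma(E',E)$-limited. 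The ``equivalently'' clause would then follow formally: since singletons are $T$-precompact one always has $\sigma(E',E)\preceq T^{0}$, so every $T^{0}$-convergent sequence in $E'$ is $\sigma(E',E)$-convergent, while conversely a $\sigma(E',E)$-convergent sequence may be assumed $\sigma(E',E)$-null after translation and, by what was just proved, converges to $0$ uniformly on each $T$-precompact set, i.e.\ is $T^{0}$-null.

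The real work is the last assertion, which I would prove by contraposition. Assuming Smulian's condition for $\langle E',E\rangle$, I would let $A$ be $\sigma(E',E)$-limited and suppose $A$ is \emph{not} $T$-precompact. First note that a limited set is $\sigma(E,E')$-bounded, hence $T$-bounded, so $A$ is bounded, and that $A-A$ is again $\sigma(E',E)$-limited. Non-precompactness supplies a closed absolutely convex $T$-neighbourhood $U$ of $0$ admitting no finite cover of $A$ by translates, whence points $a_{n}\in A$ with $a_{n+1}\notin\bigcup_{i\le n}(a_{i}+U)$ and therefore $a_{n}-a_{m}\notin U=U^{\circ\circ}$ for $n\ne m$; setting $B:=U^{\circ}$ this reads $\sup_{f\in B}|f(a_{n}-a_{m})|>1$. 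By Alaoglu--Bourbaki $B$ is $\sigma(E',E)$-compact, and by Smulian's condition it is $\sigma(E',E)$-sequentially compact. I would then build, by a nested-subsequence (gliding-hump) construction, pairs $c_{l}:=a_{t_{l}}-a_{s_{l}}$ and functionals $f_{l}\in B$ with $|f_{l}(c_{l})|>1$ and $|f_{j}(c_{l})|<2^{-l}$ for all $j<l$: having fixed $f_{1},\dots,f_{l-1}$, pass to an infinite index set on which these finitely many bounded scalar sequences $\{f_{j}(a_{\cdot})\}$ converge, choose $s_{l}<t_{l}$ far out in that set so that $|f_{j}(a_{t_{l}})-f_{j}(a_{s_{l}})|<2^{-l}$ for $j<l$, and then select $f_{l}\in B$ separating $c_{l}$. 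Passing to a $\sigma(E',E)$-convergent subsequence $f_{k_{i}}\to f\in B$, the telescoped functionals $g_{i}:=f_{k_{i+1}}-f_{k_{i}}$ form a $\sigma(E',E)$-null sequence, yet $|g_{i}(c_{k_{i+1}})|\ge|f_{k_{i+1}}(c_{k_{i+1}})|-|f_{k_{i}}(c_{k_{i+1}})|>1-2^{-k_{i+1}}>\tfrac12$, since $k_{i}<k_{i+1}$ places the cross term under the control imposed in the construction. As each $c_{k_{i+1}}\in A-A$, this contradicts $A-A$ being $\sigma(E',E)$-limited, forcing $A$ to be $T$-precompact.

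The step I expect to be the main obstacle is precisely this last construction, where Smulian's condition enters decisively. The naive attempt $g_{i}=f_{k_{i}}-f$ fails, because the weak${}^{*}$ limit $f$ of the separating functionals can mimic them on the \emph{moving} targets $c_{k_{i}}$, leaving $f(c_{k_{i}})$ uncontrolled; this circular dependence between the limit and the separators is what must be broken. My device for doing so is to enforce only the one-sided smallness $|f_{j}(c_{l})|<2^{-l}$ for $j<l$ (earlier functionals small on later pairs), which is achievable because the earlier functionals are already fixed when the later pair is chosen, and then to test the \emph{telescoped} differences $g_{i}=f_{k_{i+1}}-f_{k_{i}}$ against $c_{k_{i+1}}$, so that only this controllable cross term appears. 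Verifying that limited sets are bounded and closed under differences, and that $U=U^{\circ\circ}$, are routine and would be dispatched quickly.
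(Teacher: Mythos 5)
Your argument is correct, and it is worth noting that the paper itself supplies no proof here: it simply cites Webb (Propositions 4.1(3) and 4.7 of \emph{Sequential convergence in locally convex spaces}), so you have reconstructed the content of the cited source rather than paralleled an argument in the text. Your first half is the standard one: sequential barrelledness upgrades a $\sigma(E',E)$-null sequence to an equicontinuous set, and equicontinuity plus a finite $U$-net for the precompact set gives uniform convergence to $0$; the ``equivalently'' clause then follows since $\sigma(E',E)\preceq T^{0}$ on $E'$ always holds. Your second half is the genuinely nontrivial part and it is sound: non-precompactness yields a closed absolutely convex $U$ and a $U$-separated sequence $\{a_n\}\subset A$, the bipolar theorem produces separating functionals in the $\sigma(E',E)$-compact (hence, by Smulian, sequentially compact) polar $U^{\circ}$, and your gliding-hump construction with the \emph{telescoped} differences $f_{k_{i+1}}-f_{k_i}$ correctly circumvents the trap you identify (the weak-star limit $f$ being uncontrolled on the moving targets $c_{k_i}$) by only ever pairing an earlier, already-fixed functional against a later pair $c_l$, where the bound $|f_j(c_l)|<2^{-l}$, $j<l$, is achievable. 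This is essentially Grothendieck's double-limit/interchange device, which is also how Webb argues; the small supporting facts you flag (limited $\Rightarrow$ weakly bounded, $A-A$ limited, $U=U^{\circ\circ}$) are all routine and correctly dispatched. The one point worth writing out carefully in a final version is the nesting of the index sets in the induction, so that the bound $|f_j(c_{l'})|<2^{-l'}$ holds for \emph{all} $j<l'$ at every later stage $l'$, not just for the stage at which $f_j$ was introduced; your description (refine to an infinite index set on which all previously chosen $f_j$ converge before selecting $s_{l'}<t_{l'}$) does accomplish this.
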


\begin{proof} See \cite[Proposition 4.1(3), p. 353 and Proposition 4.7, p. 355]{WJ}
\end{proof}

Since  $\tau = \tau (X^{*}, X)$ is sequentially barrelled, the preceding result implies that the topologies $\tau^{0}$ and $\sigma (X,X^{*})$ have the same convergent sequences in $E$. Indeed, $\tau^{0}|_{X}$ is the finest locally convex topology on $X$ with the same convergent sequences as $\sigma (X,X^{*})$, i.e., $\tau^{0}|_{X}= \sigma (X,X^{*})^{+}$ \cite[Proposition 2, p. 74]{KN} or \cite[Proposition 1.1 and Proposition 1.3]{WJ}. This naturally leads one to ask what implications, if any, imposing the stronger condition that the topologies $\tau^{0}$ and $\sigma ((X^{*})^{b},X^{*})$ agree on bounded subsets of $(X^{*})^{b}$ would have for $X$. We address this in the next theorem. While this result in known, we believe the locally convex space theory perspective it offers is new.

\

\begin{theorem} Suppose $\tau = \tau (X^{*}, X)$ and $\tau^{0}$ is the topology on $X^{**}$ of uniform convergence on the precompact subsets of $\tau$. If $\tau^{0}$ agrees with $\sigma (X^{**},X^{*})$ on the bounded subsets of $X^{**}$, then $X$ does not contain a subspace isomorphic to $\ell_{1}$. 
\end{theorem}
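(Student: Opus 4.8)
The plan is to prove the contrapositive: assuming $X$ contains a subspace isomorphic to $\ell_{1}$, I will exhibit a bounded subset of $X^{**}$ on which $\tau^{0}$ is strictly finer than $\sigma(X^{**},X^{*})$. Fix an isomorphic embedding $i:\ell_{1}\to X$ and put $x_{n}=i(e_{n})$, normalized so that $x_{n}\in B_{X}$; this is a sequence equivalent to the $\ell_{1}$-basis. The heart of the plan is to produce a bounded sequence $(f_{n})$ in $X^{*}$ that is biorthogonal to $(x_{n})$, i.e. $f_{n}(x_{m})=\delta_{nm}$, and that is $\tau$-null. If this is done, then $P=\{f_{n}:n\in\mathbb{N}\}\cup\{0\}$ is a $\tau$-convergent sequence together with its limit, hence $\tau$-compact and a fortiori $\tau$-precompact. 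By the very definition of $\tau^{0}$ as the topology of uniform convergence on the $\tau$-precompact subsets of $X^{*}$, the seminorm $p_{P}(F)=\sup_{n}|F(f_{n})|$ is then a $\tau^{0}$-continuous seminorm on $X^{**}$.

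With such a $P$ in hand the contradiction is immediate. View the $x_{n}$ as points of $B_{X}\subseteq B_{X^{**}}$ and let $F^{*}$ be a $\sigma(X^{**},X^{*})$-cluster point of $(x_{n})$ in the weak-$*$ compact ball $B_{X^{**}}$, reached along a subnet $x_{n_{\alpha}}\to F^{*}$. Biorthogonality gives $p_{P}(x_{n})=\sup_{m}|f_{m}(x_{n})|=1$ for every $n$, whereas for each fixed $m$ we have $F^{*}(f_{m})=\lim_{\alpha}f_{m}(x_{n_{\alpha}})=0$, since $f_{m}(x_{n})=\delta_{mn}\to 0$ as $n\to\infty$; hence $p_{P}(F^{*})=0$. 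Thus $p_{P}$ is $\tau^{0}$-continuous but is not $\sigma(X^{**},X^{*})$-continuous on the bounded set $\overline{\{x_{n}\}}^{\,\sigma(X^{**},X^{*})}$, so the two topologies cannot agree there. This contradicts the hypothesis and forces $X$ to contain no copy of $\ell_{1}$.

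The entire argument therefore collapses onto the one analytic step, which I expect to be the main obstacle: arranging the biorthogonal functionals $(f_{n})$ so that $f_{n}\to 0$ in $\tau(X^{*},X)$, equivalently $\sup_{w\in W}|f_{n}(w)|\to 0$ for every weakly compact absolutely convex $W\subseteq X$. When the copy $Y=\overline{\operatorname{span}}\{x_{n}\}$ is complemented by a projection $\pi:X\to Y$, this is routine: take $f_{n}=e_{n}^{*}\circ\pi$, which are weak-$*$ null and biorthogonal; if $\sup_{W}|f_{n}|\not\to 0$ one obtains $w_{k}\in W$ with $f_{n_{k}}(w_{k})\ge\eta$, extracts a weakly convergent subsequence $w_{k}\to w$ by Eberlein--\v{S}mulian, reduces to the weakly null sequence $v_{k}=w_{k}-w$ with $f_{n_{k}}(v_{k})\ge\eta/2$, and then $\pi v_{k}\to 0$ weakly in $Y\cong\ell_{1}$, hence in norm by the Schur property, so that $f_{n_{k}}(v_{k})=(\pi v_{k})_{n_{k}}\to 0$, a contradiction.

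For a general, possibly uncomplemented, copy of $\ell_{1}$ the projection is unavailable, and this is precisely where Rosenthal's $\ell_{1}$ theorem must enter. One is left with a bounded weak-$*$ null sequence $(f_{n_{k}})$ in $X^{*}$ and a weakly null sequence $(v_{k})$ in $X$ for which $f_{n_{k}}(v_{k})$ is bounded away from $0$; this configuration is to be excluded by invoking Rosenthal's dichotomy (together with Eberlein--\v{S}mulian applied to the weakly compact $W$) to extract, from the interaction of the $\ell_{1}$-system $(x_{n},f_{n})$ with the weakly convergent $(v_{k})$, a subsequence whose behaviour is incompatible with $W$ being relatively weakly compact. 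Producing a genuinely $\tau$-null biorthogonal sequence for an arbitrary isomorphic copy of $\ell_{1}$ is thus the crux on which the theorem turns; the remaining steps are the soft bookkeeping above. As a consistency check I would also note that the hypothesis already forces $\tau^{+}\preceq\tau^{p}$ on $X^{*}$, via condition (e) of the second Webb proposition, since agreement of $\tau^{0}$ and $\sigma(X^{**},X^{*})$ on all bounded sets entails their agreement on the $\tau$-limited ones; this is the locally convex shadow of the conclusion.
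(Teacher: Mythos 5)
Your reduction of the theorem to the existence of a bounded, biorthogonal, $\tau(X^{*},X)$-null sequence $(f_{n})$ is sound as bookkeeping: granting such a sequence, $P=\{f_{n}\}\cup\{0\}$ is indeed $\tau$-compact, $p_{P}$ is a $\tau^{0}$-continuous seminorm, and the estimate $p_{P}(x_{n_{\alpha}}-F^{*})\ge |f_{n_{\alpha}}(x_{n_{\alpha}})-F^{*}(f_{n_{\alpha}})|=1$ correctly contradicts agreement of $\tau^{0}$ with $\sigma(X^{**},X^{*})$ on $B_{X^{**}}$. But the proof stops exactly at the step you yourself call the crux, and that step is a genuine gap. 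Your construction of $(f_{n})$ works only when the copy $Y\cong\ell_{1}$ is complemented, since $f_{n}=e_{n}^{*}\circ\pi$ uses the projection $\pi$ twice: to extend the coordinate functionals boundedly and to run the Schur-property argument $\pi v_{k}\to 0$ in norm. Copies of $\ell_{1}$ need not be complemented --- in $C[0,1]$, the prototypical space containing $\ell_{1}$, no copy of $\ell_{1}$ is complemented (a projection onto it would be a non-weakly-compact operator from a $C(K)$ space fixing no copy of $c_{0}$, contradicting Pe\l czy\'nski's property (V)) --- so the complemented case does not prove the theorem. For the general case you offer only the assertion that Rosenthal's dichotomy should ``exclude the configuration''; this is a hope, not an argument: it is not specified to which sequence Rosenthal's theorem is applied, what alternative of the dichotomy is obtained, or how it contradicts relative weak compactness of $W$. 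Arbitrary Hahn--Banach extensions of the coordinate functionals are bounded and biorthogonal but have no reason to be Mackey-null, and making them so is essentially the whole analytic content of the theorem.

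For comparison, the paper's proof is direct and avoids any such construction: from the hypothesis it extracts, for a $\tau$-precompact $P$ and each $\alpha>0$, a weak$^{*}$ neighborhood $V$ with $V\cap\alpha B_{X^{**}}\subset P^{o}$, and a polar computation then places $P$ inside $[V^{o}]+\tfrac{1}{\alpha}B_{X^{*}}$ with $[V^{o}]$ finite dimensional; hence $P$ is norm-relatively compact, and Emmanuele's theorem (Theorem 3.3, (b)$\Leftrightarrow$(c)) concludes that $X$ contains no copy of $\ell_{1}$. If you wish to keep a soft argument, the cheapest repair of your approach also routes through Emmanuele rather than through biorthogonal functionals: by the Banach--Dieudonn\'e theorem the finest topology on $X^{**}$ agreeing with $\sigma(X^{**},X^{*})$ on bounded sets is the topology of uniform convergence on the norm-precompact subsets of $X^{*}$, so the hypothesis forces $\tau^{0}\preceq \|\cdot\|_{c}$ and hence every $\tau$-precompact subset of $X^{*}$ to be norm-precompact, at which point Emmanuele's characterization finishes. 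Either way, an input of Emmanuele's strength is needed; your proposal does not yet contain it.
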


\begin{proof} Let $P$ be a precompact subset of $(X^{*},\tau ((X^{*},X))$. Then for every $\alpha > 0, \,\, P^{o}\cap \alpha B_{X^{**}}$ contains the intersection $V\cap \alpha B_{X^{**}}$, where $P^{0}$ is the absolute polar of $P$ and $V = V(x_{1}^{*}, \dots ,x_{n}^{*};\delta) := \{x^{**}\in X^{**}: |x^{**}(x_{i})| < \delta,\,\, i=1\dots n\}$ for some $\delta > 0$, is a $\sigma (X^{**},X^{*})$ - neighborhood of $0$. It now follows that $\overline{aco}(P^{oo}\cup (\alpha B_{X^{**}})^{o}) \subset \overline{aco}(V^{o}\cup (\alpha B_{X^{**}})^{o})\subset \overline{aco}(V^{o} + (\alpha B_{X^{**}})^{o}) \subset \overline{aco}(V^{o} + (\frac{1}{\alpha} B_{X^{*}})) \subset \overline{aco}([V^{o}] + \frac{1}{\alpha} B_{X^{*}})$, where $\overline{aco}(A)$ is the absolutely closed convex hull of $A\subset X^{*}$ and $[V^{o}]$ is linear span of $[V^{o}]$. Hence $P\subset [V^{o}] + \frac{1}{\alpha} B_{X^{*}}$ for all $\alpha > 0$. So $P\subset \bigcap_{\alpha > 0}([V^{o}] + \frac{1}{\alpha} B_{X^{*}}) = [V^{o}]$. Since $[V^{o}]$ is a finite dimensional subspace of $X^{*}$ and $P$ is bounded, we get that $P$ is $||\cdot ||$-relatively compact. So $X$ does not contain a subspace isomorphic to $\ell_{1}$, by Theorem 3.3.
\end{proof}

\begin{corollary} Let $\tau = \tau (X^{*}, X)$ and $\tau^{0}$ be as in the preceding theorem and suppose that $\tau^{0}$ agrees with $\sigma (X^{**},X^{*})$ on the bounded subsets of $X^{**}$, then $\tau^{0} = ||\cdot ||_c = ||\cdot||^n$, where $||\cdot||^n$ is the topology of uniform convergence on all the norm null sequences in $X^*$.
\end{corollary}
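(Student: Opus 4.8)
The plan is to reduce the three-way equality to a single fact, supplied by the preceding theorem, namely that under the standing hypothesis the $\tau$-precompact subsets of $X^{*}$ are exactly the $\|\cdot\|$-relatively compact subsets of $X^{*}$; the remaining comparisons are then formal manipulations of topologies of uniform convergence together with a classical lemma of Grothendieck. Throughout, $\|\cdot\|_{c}$ denotes the topology on $X^{**}$ of uniform convergence on the norm-compact subsets of $X^{*}$, by analogy with item (9) applied to $(X^{*},\|\cdot\|)$, whose dual is $X^{**}$.

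I would first establish $\tau^{0} = \|\cdot\|_{c}$ by two inclusions. Since $\tau(X^{*},X) \preceq \|\cdot\|$, every norm-compact subset of $X^{*}$ is $\tau$-compact, hence $\tau$-precompact; as the defining family for $\|\cdot\|_{c}$ is thereby contained in the defining family for $\tau^{0}$, this gives $\|\cdot\|_{c} \preceq \tau^{0}$. Conversely, the proof of the preceding theorem shows that under the hypothesis every $\tau$-precompact set $P \subset X^{*}$ is $\|\cdot\|$-relatively compact (this is exactly where the absence of $\ell_{1}$, via Emmanuele's theorem, enters). Thus each such $P$ sits inside the norm-compact set $\overline{P}^{\,\|\cdot\|}$, and uniform convergence on $\overline{P}^{\,\|\cdot\|}$ dominates uniform convergence on $P$; hence $\tau^{0} \preceq \|\cdot\|_{c}$, and the two topologies coincide.

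Next I would compare $\|\cdot\|_{c}$ with $\|\cdot\|^{n}$. The range of any norm-null sequence $\{x_{n}^{*}\}$, together with its limit $0$, is norm-compact, and adjoining $0$ does not change the polar; so the defining family for $\|\cdot\|^{n}$ consists of norm-compact sets, giving $\|\cdot\|^{n} \preceq \|\cdot\|_{c}$ at once. For the reverse inequality I would invoke the classical theorem of Grothendieck that every norm-compact subset $K$ of a Banach space lies in the closed absolutely convex hull $\overline{aco}\{x_{n}^{*}\}$ of some norm-null sequence $\{x_{n}^{*}\}$. Since the polar is unchanged on passing to the closed absolutely convex hull, $\{x_{n}^{*}\}^{o} = (\overline{aco}\{x_{n}^{*}\})^{o} \subseteq K^{o}$, so the basic $\|\cdot\|_{c}$-neighborhood $K^{o}$ contains a basic $\|\cdot\|^{n}$-neighborhood; therefore $\|\cdot\|_{c} \preceq \|\cdot\|^{n}$. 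Combining the two halves yields $\|\cdot\|_{c} = \|\cdot\|^{n}$, and together with the first step $\tau^{0} = \|\cdot\|_{c} = \|\cdot\|^{n}$.

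The genuine content is concentrated entirely in the second inclusion of the first step: the identification of the $\tau$-precompact with the norm-relatively-compact subsets of $X^{*}$, which is imported wholesale from the preceding theorem and is not re-proved here. I expect no real obstacle in the rest; the only points requiring a little care are that $\tau^{0}$ is defined through (not necessarily closed) precompact sets whereas $\|\cdot\|_{c}$ uses compact sets, which is handled by passing to norm-closures, and keeping straight that enlarging the set on which convergence is measured refines the associated topology and shrinks the corresponding polar neighborhood. Grothendieck's compact-sets-inside-null-sequence-hulls lemma is standard, so I anticipate the argument to be short once the first step is invoked.
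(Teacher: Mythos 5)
Your proof is correct, and the halves $\|\cdot\|_{c}\preceq\tau^{0}$ and $\|\cdot\|_{c}=\|\cdot\|^{n}$ coincide with the paper's (the latter is exactly the Grothendieck compactness principle the paper cites). Where you diverge is the key inclusion $\tau^{0}\preceq\|\cdot\|_{c}$: the paper gets it in one line from the Banach--Dieudonn\'e characterization of $\|\cdot\|_{c}$ as the \emph{finest} topology on $X^{**}$ agreeing with $\sigma(X^{**},X^{*})$ on bounded sets, so the hypothesis of the corollary immediately forces $\tau^{0}\preceq\|\cdot\|_{c}$. You instead import the intermediate conclusion of the preceding theorem's proof --- that every $\tau$-precompact $P\subset X^{*}$ is norm-relatively compact --- and compare polars of $P$ and $\overline{P}^{\|\cdot\|}$. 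Both routes are valid; the paper's is more economical because it never needs the compactness transfer at all for this inclusion (that transfer is the content of the preceding theorem, whose stated conclusion is only the absence of $\ell_{1}$, so you are relying on a fact buried in its proof rather than on its statement). One small correction to your commentary: in the preceding theorem the norm-relative compactness of $\tau$-precompact sets is derived \emph{directly} from the polar computation, and Emmanuele's theorem is then used to deduce the absence of $\ell_{1}$ from it, not the other way around; so Emmanuele's theorem is not where that inclusion ``enters'' (though your alternative derivation via Emmanuele plus completeness of $(X^{*},\tau)$ would also work).
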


\begin{proof}
Since every norm compact subset of $X^*$ is $\tau^{0}$ - compact, it follows it follows that $||\cdot ||_c$ is courser than $\tau^{0}$. But $||\cdot ||_c$ is the finest topology on $X^{**}$ that agrees with $\sigma (X^{**},X^{*})$ on the bounded subsets of $X^{**}$, and so $\tau^{0}$ is courser than $||\cdot ||_c$. Hence $\tau^{0} = ||\cdot ||_c$ That the two topologies coincide with $||\cdot||^n$ on $X^{**}$ follows from the Grothendieck compactness principle (which can be deduced from the Banach-Dieudonn$\acute{e}$ theorem) [12, section 21.10(3)].
\end{proof}

We conclude by proving a characterization of sequentially reflexive Banach spaces which we believe adds a new perspective and places the content of this paper in a larger context.

\

\begin{definition} Let $(E,T)$ be a locally convex space. Let $\mathcal{U}$ be the class of circled, convex subsets of $E$ such that each set in $\mathcal{U}$ absorbs every $T$-bounded subset of $E$. By \cite[Theorem 19.1, pages 181-182]{KJ}, $\mathcal{U}$ is a local base at 0 for a topology on $E$ called the {\bf bound extension of} $T$ or the {\bf bound topology derived from} $T$ and is denoted by $T^{b}$. In general,  $T\subset T^{b}$. If $T = T^{b}$, then $T$ is called a bound topology and  $(E,T)$ is called a bound space or a  bornological space. Thus $T$ is a bound topology if and only if every circled, convex subset $A\subset E$ which absorbs every $T$-bounded subset of $E$ is a $T$-neighborhood of 0. Equivalently, a bornological space is a locally convex space on which each semi-norm that is bounded on bounded sets is continuous \cite[p. 61]{SH}.
\end{definition}

The relevant facts about the bound extension of a locally convex topology are contained in the following theorem.

\begin{theorem} Let $T^{b}$ be the bound extension of a locally convex topology $T$ on a linear space $E$. Then:
\begin{itemize}
\item[(1)] $T^{b}$ is a bound topology and the class $\mathcal{B}$ of $T$-bounded subsets of $E$ is identical with the class of $T^{b}$-bounded subsets of $E$; moreover $T^{b}$ is the finest topology with the same bounded sets as $T$
\item[(ii)] a family $F$ of linear functions on $E$ to a locally convex space $H$ is $T^{b}$-equicontuous if and only if $F$ is bounded relative to the topology $T_{\mathcal{B}}$ of uniform convergence on the bounded subsets of $E$.
\end{itemize}
 
In particular, a linear function from $E$ to $H$ is $T^{b}$-continuous if and only if it is bounded. Equivalently, a linear function $f:E\rightarrow H$ is continuous if and only if $\{f(x_{n})\}$ is a null sequence in $H$ for every null sequence $\{x_{n}\}$ in $E$.
\end{theorem}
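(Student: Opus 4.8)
The plan is to derive everything from the explicit local base for $T^{b}$ furnished by the definition, namely the class $\mathcal{U}$ of all circled convex subsets of $E$ that absorb every $T$-bounded set. I would organize the whole argument around the single observation that a circled convex set is a basic $T^{b}$-neighborhood of $0$ precisely when it absorbs $\mathcal{B}$.

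First I would pin down the bounded sets. Since $T\subseteq T^{b}$, every $T^{b}$-bounded set is $T$-bounded; conversely, if $B\in\mathcal{B}$ then every basic $T^{b}$-neighborhood $U\in\mathcal{U}$ absorbs $B$ by the very definition of $\mathcal{U}$, so $B$ is $T^{b}$-bounded. Hence $\mathcal{B}$ is exactly the class of $T^{b}$-bounded sets. This identity immediately gives the rest of (1): a circled convex set absorbs every $T^{b}$-bounded set iff it absorbs every $T$-bounded set, so $(T^{b})^{b}$ and $T^{b}$ share the local base $\mathcal{U}$ and $T^{b}=(T^{b})^{b}$ is a bound topology; and if $S$ is any locally convex topology with the same bounded sets as $T$, then each circled convex $S$-neighborhood of $0$ absorbs all $T$-bounded sets, hence lies in $\mathcal{U}$ and is a $T^{b}$-neighborhood, so $S\subseteq T^{b}$ and $T^{b}$ is the finest such topology.

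Next, for (ii), I would translate equicontinuity into a preimage/polar statement. Given a family $F$ of linear maps $E\to H$ and a circled convex neighborhood $W$ of $0$ in $H$, the natural candidate is $U=\bigcap_{f\in F}f^{-1}(W)$, which is circled and convex and satisfies $f(U)\subseteq W$ for every $f\in F$. The key computation is that $U$ absorbs a given $B\in\mathcal{B}$ iff $F$ is absorbed by the basic $T_{\mathcal{B}}$-neighborhood $N(B,W)=\{g:g(B)\subseteq W\}$: indeed $B\subseteq\lambda U$ says exactly that $f(B)\subseteq\lambda W$ for all $f\in F$, i.e.\ $F\subseteq\lambda N(B,W)$. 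Running this over all $B\in\mathcal{B}$ and all $W$ shows that $F$ is $T^{b}$-equicontinuous (so that such a $U\in\mathcal{U}$ exists for every $W$) if and only if $F$ is $T_{\mathcal{B}}$-bounded. The ``in particular'' clause is then the singleton case $F=\{f\}$: the set $\{f\}$ is $T^{b}$-equicontinuous iff $f$ is $T^{b}$-continuous, while $\{f\}$ is $T_{\mathcal{B}}$-bounded iff $f(B)$ is bounded in $H$ for each $B\in\mathcal{B}$, i.e.\ iff $f$ is bounded.

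Finally I would deduce the sequential reformulation from the equivalence ``$f$ is bounded iff $f$ preserves null sequences.'' The easy direction is that a map sending null sequences to null sequences is bounded: if some $B\in\mathcal{B}$ had $f(B)$ unbounded, I would pick $b_{n}\in B$ with $f(b_{n})\notin n^{2}W$ for a circled $W$, and observe that $x_{n}=b_{n}/n\to 0$ while $f(x_{n})\notin W$, a contradiction. The reverse implication is where the only genuine subtlety lies: to see that a bounded $f$ sends a null sequence $x_{n}\to 0$ to a null sequence, I would rescale, passing to scalars $\lambda_{n}\to\infty$ with $\{\lambda_{n}x_{n}\}$ still bounded, so that $\{\lambda_{n}f(x_{n})\}=\{f(\lambda_{n}x_{n})\}$ is bounded and therefore $f(x_{n})=\lambda_{n}^{-1}(\lambda_{n}f(x_{n}))\to 0$. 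I expect this rescaling to be the main obstacle: the ``stretching'' of a null sequence is harmless for locally (Mackey) null sequences and is automatic in the first-countable case, but must be invoked with care in a general locally convex space. This is precisely the point governed by the cited material in K\"othe, and it is what makes bound continuity equivalent to the stated sequential condition.
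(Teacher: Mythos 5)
The paper itself offers no argument for this theorem; it simply cites Kelley--Namioka (Theorem 19.3) and Schaefer (Ch.~II, Theorem 8.3), so any written-out proof is ``different'' by default. Your treatment of (1) and (ii) is correct and is essentially the textbook argument the paper points to: the identification of the $T$- and $T^{b}$-bounded sets straight from the definition of $\mathcal{U}$, the consequences $(T^{b})^{b}=T^{b}$ and maximality, and the polar computation $B\subseteq\lambda\bigcap_{f\in F}f^{-1}(W)\iff F\subseteq\lambda N(B,W)$ are all sound.

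The gap is in the final sequential equivalence, and you have correctly located it but not closed it. Your implication ``bounded $\Rightarrow$ sends null sequences to null sequences'' rests on the stretching lemma: for every $T$-null sequence $\{x_{n}\}$ there exist scalars $\lambda_{n}\to\infty$ with $\{\lambda_{n}x_{n}\}$ bounded. This is false in a general locally convex space. For instance, in $\mathbb{R}^{I}$ with the product topology, where $I$ is the set of all real null sequences $a=(a_{k})$ and $f_{n}(a)=a_{n}$, one has $f_{n}\to 0$, yet for any prescribed $\lambda_{n}\to\infty$ one can construct $a\in I$ with $\sup_{n}\lambda_{n}|a_{n}|=\infty$; so no stretching exists. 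Worse, if ``null sequence in $E$'' is read as $T$-null, the stated equivalence itself fails: the identity map from an infinite-dimensional Banach space with its weak topology to the same space with its norm topology is bounded (hence $T^{b}$-continuous, since here $T^{b}$ is the norm topology) but does not send weakly null sequences to norm-null sequences. The assertion is correct only for $T^{b}$-null sequences (equivalently, in the cited sources' setting, where $E$ is already bornological), and under that reading no stretching is needed: close the cycle as $T^{b}$-continuous $\Rightarrow$ sequentially continuous (trivial) $\Rightarrow$ bounded (your $b_{n}/n$ argument, noting that $b_{n}/n$ is $T^{b}$-null because $B$ is $T^{b}$-bounded) $\Rightarrow$ $T^{b}$-continuous (your part (ii) with $F=\{f\}$). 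The implication ``bounded $\Rightarrow$ sequentially continuous'' should be obtained by composing the last two arrows, not proved directly.
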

\begin{proof} See \cite[Theorem 19.3, p. 183]{KJ} and \cite[Ch. II, Theorem 8.3, p. 62]{SH}
\end{proof}

{\bf Remark} Theorem 3.3 allows us to deduce the following result.

\begin{corollary}
If a Banach space $X$ does not contain an isomorphic copy of $\ell_{1}$, then topology $\tau (X^{*}, X) := \tau\prec\tau^{+}=\tau^{p}=\tau^{b}\,\, (= ||\cdot||^{*}$). This means that $\tau^{p}$ is the bound extension of $\tau = \tau (X^{*}, X)$ in this case and hence  $\tau^{p} = \tau (X^{*}, X^{**}) = ||\cdot ||_{X^{*}}$ and consequently $(X^{*},\tau^{p})$ is a bornological space.
\end{corollary}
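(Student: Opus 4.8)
The plan is to pin down all four topologies by identifying each with the dual norm $||\cdot||^{*}$ on $X^{*}$, working from the outer end of the chain inward. I would first record the two facts that require no hypothesis on $X$. Since $\tau = \tau (X^{*},X)$ is compatible with the dual pair $\langle X^{*},X\rangle$, its bounded sets are exactly the $\sigma (X^{*},X)$-bounded sets, which by the uniform boundedness principle are the norm-bounded subsets of $X^{*}$; hence $\tau$ and $||\cdot||^{*}$ have the same bounded sets. Because $(X^{*},||\cdot||^{*})$ is normed, hence metrizable, hence bornological, the norm topology equals its own bound extension and is therefore the finest locally convex topology whose bounded sets are the norm-bounded sets. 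As $\tau^{b}$ is by definition that same finest topology, $\tau^{b}=||\cdot||^{*}$. I would also invoke the standard fact that on a Banach space the norm topology is the Mackey topology of the canonical pairing, so $||\cdot||^{*}=\tau (X^{*},X^{**})$. By the Proposition of Section 2 we already have $\tau\preceq\tau^{+}\preceq\tau^{b}$ and $\tau\preceq\tau^{p}\preceq\tau^{b}=||\cdot||^{*}$, so both $\tau^{+}$ and $\tau^{p}$ are sandwiched between $\tau$ and the dual norm, and it remains only to prove the reverse inequalities.

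For $\tau^{p}$ the heart of the matter is the claim that the $\tau$-precompact subsets of $X^{*}$ are precisely the norm-precompact ones. One inclusion is immediate from $\tau\preceq||\cdot||^{*}$. For the converse, let $A$ be $\tau$-precompact; then $A$ is $\tau^{p}$-precompact by the definition of $\tau^{p}$, and since $(X^{*},\tau^{p})$ is complete by the completeness Lemma, the $\tau^{p}$-closure $\overline{A}^{\tau^{p}}$ is $\tau^{p}$-compact. As $\tau\preceq\tau^{p}$, the identity map $(X^{*},\tau^{p})\to (X^{*},\tau)$ is continuous, so $\overline{A}^{\tau^{p}}$ is $\tau$-compact; Emmanuele's Theorem 3.3(b) then forces it to be norm-compact, whence $A$ is norm-precompact. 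Consequently $||\cdot||^{*}$ and $\tau$ have the same precompact sets, and since $\tau^{p}$ is the finest locally convex topology with that collection of precompact sets while $||\cdot||^{*}$ is one such topology, we get $||\cdot||^{*}\preceq\tau^{p}$. Together with the sandwich this yields $\tau^{p}=||\cdot||^{*}=\tau^{b}$, which is exactly the statement that $\tau^{p}$ is the bound extension of $\tau$; bornologicity of $(X^{*},\tau^{p})$ then follows at once, since $\tau^{p}=\tau^{b}$ is a bound topology.

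For $\tau^{+}$ I would argue identically, replacing ``precompact set'' by ``convergent sequence.'' Here the hypothesis enters through the Orno--Valdivia Theorem 3.2: $X$ contains no copy of $\ell_{1}$ iff $X$ is sequentially reflexive, i.e.\ every $\tau$-convergent sequence in $X^{*}$ is norm-convergent. Since the reverse implication is automatic, $\tau$ and $||\cdot||^{*}$ have the same convergent sequences, and as $\tau^{+}$ is the finest locally convex topology with those convergent sequences we obtain $||\cdot||^{*}\preceq\tau^{+}$, hence $\tau^{+}=||\cdot||^{*}$. This completes the chain $\tau\prec\tau^{+}=\tau^{p}=\tau^{b}=||\cdot||^{*}$ and all the stated identifications.

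I expect the main obstacle to be the precompactness step: the equality of the $\tau$-precompact and the norm-precompact sets is not formal and genuinely consumes both the completeness of $(X^{*},\tau^{p})$ and Emmanuele's characterization. Completeness is precisely what upgrades ``$\tau$-precompact'' to ``$\tau^{p}$-relatively compact,'' and without it one cannot pass to a $\tau$-compact, and hence norm-compact, set to which Theorem 3.3 applies. By contrast the $\tau^{+}$ step is softer, since sequential reflexivity is exactly the sequential statement one needs and requires no completeness input.
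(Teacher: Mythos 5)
Your proof is correct and follows essentially the same route as the paper: identify $\tau^{b}$ with the dual norm via the coincidence of bounded sets and bornologicity of the norm, upgrade $\tau$-precompact sets to $\tau$-compact ones by completeness, and invoke Emmanuele's theorem to conclude that $\tau$ and $||\cdot||^{*}$ share their precompact sets, whence $\tau^{p}=\tau^{b}$. The only differences are cosmetic: you route the compactness upgrade through the completeness of $(X^{*},\tau^{p})$ (Lemma 2.9) rather than of $(X^{*},\tau)$ itself, and you spell out the $\tau^{+}=||\cdot||^{*}$ identification via Orno--Valdivia, a step the paper leaves implicit.
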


\begin{proof}
Obviously, $\tau^{b}$ is the norm topology $||\cdot ||^{*}$ on $X^{*}$ ($\tau$ and $||\cdot ||^{*}$ have the same bounded sets and $(X^{*}, ||\cdot ||^{*})$ is bornological).  Moreover, if $A\subset X^{*}$ is $\tau$-precompact, then it is $\tau$-relatively compact, hence $||\cdot ||^{*}$-compact, i.e., $||\cdot ||^{*}$-precompact. We have now shown that every $\tau$-precompact subset of $X^{*}$ is $\tau^{b}$-precompact and so $\tau^{b}\prec \tau^{p}$ by definition of $\tau^{p}$. Since $\tau^{p}\prec \tau^{b}$ (by Proposition 2.4) we get that $\tau^{p} =  \tau^{b}$.
\end{proof}

We note that $\tau \neq \tau^{b}$ iff $X$ is not reflexive. This is the case, in particular, for $X = c_{0}$.

\

We consider yet another example.

\

{\bf Example}: Let $E = \ell_{1}, \,\, \tau =\tau (\ell_{1}, c_{0})$. Then $\tau^{b} = \tau (\ell_{1},\ell_{\infty})$, the norm topology of $\ell_{1}$. Since $c_{0}$ contains no isomorphic copy of $\ell_{1}$, every $\tau$-compact subset of $\ell_{1}$ is norm-compact. Also, since $\ell_{1}$ is also a Schur space, every $\sigma (\ell_{1}, \ell_{\infty})$-compact set is norm-compact. So the topologies $\sigma (\ell_{1}, \ell_{\infty})$, $\tau (\ell_{1}, c_{0})$, and $\tau (\ell_{1},\ell_{\infty}) = ||\cdot ||_{\ell_{1}} = \tau^{b}$ have the same compact sets and, in fact, coincide on each of these sets. Thus, $\tau\neq \tau^{+} =\tau^{p} = \tau^{b} = ||\cdot ||_{\ell_{1}}$ in this case.

\

\begin{definition} Let $X$ and $Y$ be Banach spaces. A linear operator $T:X\rightarrow Y$ is said to be compact (respectively, weakly compact) if $T(B_{X})$ is relatively compact (respectively, relatively weakly compact). A bounded linear operator is said to be a Dunford-Pettis operator (or completely continuous) if it maps weakly compact sets to norm compact sets. We say $X$ has the Dunford-Pettis Property (DP) if every weakly compact operator $T:X\rightarrow Y$ is a Dunford-Pettis operator. Similarly, $X$ is said to have teh reciprocal Dunford-Pettis property (RDP) if every Dunford-Pettis operator $T:X\rightarrow Y$ is weakly compact. $X$ has the hereditary RDP if every closed subspace of $X$ has RDP. We note that if $K$ is a compact Hausdorff space, then $X = C(K)$ has both DP and RDP. 
\end{definition}

\

If $X$ is a Banach space and $\tau = \tau (X^{*},X)$, the relations $\tau\prec\tau^{+} =\tau^{p} = \tau^{b} = ||\cdot ||^{*}$ hold if and only if $X$ contains no isomorphic copy of $\ell_{1}$, by Corollary 3.12. It is well known $X$ has the hereditary reciprocal Dunford-Pettis property (RDP) if and only if $X$ contains no copy of $\ell_{1}$ \cite[Proposition 2.4]{SG2}. We will now show that $X$ has the RDP if $\tau^{+} = \tau^{p}$. Thus the gap between of $(X^{*}, \tau (X^{*},X))$ satisfying the condition $\tau^{+ }= \tau^{p}$ on the one hand and sequential reflexivity of $X$ on the other hand is, in a sense, the gap between $X$ having the RDP and  $X$ having the hereditary RDP. Analogously the difference between a Banach space $X$ having the RDP and $X$ having the hereditary RDP is apparently the difference between $(X^{*},\tau (X^{*},X))$ satisfying the condition $\tau^{+} = \tau^ {p}$ on the one hand and the topology $\tau^{p}$ being the bound extension of $\tau (X^{*},X)$ so that $\tau^{p} = \tau^{b}$, on the other hand. We will now establish these remarks.

\

We will use the following characterization of Banach spaces with the RDP.

\begin{theorem} If $X$ is a Banach space, then the following statements are equivalent.
\begin{itemize}
\item[(1)] $X$ has the RDP.
\item[(2)] For each Banach space $Y$, each bounded linear operator $T: X\rightarrow Y$ which maps weakly compact sets onto norm-compact sets is a weakly compact operator ($T(B_{X}$ is norm-relatively compact).
\item[(3)] For each Banach space $Y$, each bounded linear operator which maps $\sigma (X,X^{*})$-null sequences onto norm-null sequences is a weakly compact operator.
\item[(4)] If $K\subset X^{*}$ satisfies the condition\\

(*)\hspace{0.1in} $\lim_{n\rightarrow\infty}\sup_{x^{*}\in K} x^{*}(x_{n}) = 0$  for each weakly null sequence$ \{x_{n}\}$  in $X$, 

then $K$ is $\sigma (X^{*},X^{**})$-relatively sequentially compact.
\end{itemize}
\end{theorem}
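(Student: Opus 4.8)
The plan is to prove the four equivalences as a cycle, exploiting that (1)--(3) are essentially reformulations of one another through the standard sequential description of Dunford--Pettis operators, while (4) is the adjoint (dual) version delivered by Gantmacher's theorem. I would organize the argument as $(1)\Leftrightarrow(2)\Leftrightarrow(3)$, then $(3)\Rightarrow(4)$ and $(4)\Rightarrow(1)$.

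First I would dispose of $(1)\Leftrightarrow(2)$, which is purely definitional: a Dunford--Pettis operator is exactly a bounded operator carrying weakly compact sets to norm-compact sets, and ``weakly compact operator'' means $T(B_X)$ is $\sigma(Y,Y^*)$-relatively compact, so (2) merely writes out the assertion that every Dunford--Pettis operator is weakly compact. For $(2)\Leftrightarrow(3)$ I would invoke the classical fact that a bounded $T$ sends weakly compact sets to norm-compact sets if and only if it sends $\sigma(X,X^*)$-null sequences to norm-null sequences. The nontrivial direction uses the Eberlein--\v{S}mulian theorem: given a weakly compact $W$ and a sequence in $T(W)$, one pulls it back, extracts a weakly convergent subsequence $x_{n_k}\to x$, observes $x_{n_k}-x$ is weakly null so that $Tx_{n_k}\to Tx$ in norm, and concludes $T(W)$ is norm sequentially compact, hence norm compact.

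The substantive work lies in $(3)\Rightarrow(4)$ and $(4)\Rightarrow(1)$. For $(4)\Rightarrow(1)$, given a Dunford--Pettis $T:X\to Y$ I set $K=T^{*}(B_{Y^{*}})$; then $\sup_{x^{*}\in K}x^{*}(x_{n})=\|Tx_{n}\|\to 0$ on weakly null $\{x_{n}\}$, so $K$ satisfies (*). By (4), $K$ is $\sigma(X^{*},X^{**})$-relatively sequentially compact, hence relatively weakly compact by Eberlein--\v{S}mulian, so $T^{*}$ is weakly compact and thus $T$ is weakly compact by Gantmacher's theorem, yielding RDP. For $(3)\Rightarrow(4)$, I would first note any $K$ satisfying (*) is norm bounded (if $\sup_{x^{*}\in K}|x^{*}(x)|=\infty$ for some $x$, the weakly null sequences $\pm x/n$ violate (*)), so Banach--Steinhaus applies; then I form $W=\overline{\mathrm{aco}}^{\,w^{*}}(K)$, which is weak*-compact, absolutely convex, and still satisfies (*). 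The gauge $p(x)=\sup_{x^{*}\in W}|x^{*}(x)|$ is a continuous seminorm, and I let $q:X\to Y_{0}$ be the canonical map into the Banach space $Y_{0}$ associated with $p$. Since $\|q(x_{n})\|=p(x_{n})\to 0$ on weakly null sequences, $q$ is Dunford--Pettis, so by (3) it is weakly compact; a bipolar computation identifies $W=\overline{q^{*}(B_{Y_{0}^{*}})}^{\,w^{*}}$, and $K\subseteq W$.

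The main obstacle I anticipate is the closing topological step: upgrading the weak*-compactness of $W$ to genuine weak compactness. The key observation is that weak compactness of $q^{*}$ makes $C:=\overline{q^{*}(B_{Y_{0}^{*}})}^{\,w}$ weakly compact, and on $C$ the identity map $(C,\sigma(X^{*},X^{**}))\to(C,\sigma(X^{*},X))$ is a continuous bijection from a compact space onto a Hausdorff one, hence a homeomorphism, so the weak and weak* topologies coincide on $C$. Consequently the weak*-closed set $W\subseteq C$ is weakly closed, hence weakly compact, and $K\subseteq W$ is $\sigma(X^{*},X^{**})$-relatively compact, which is (4) after one more appeal to Eberlein--\v{S}mulian. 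With $(1)\Leftrightarrow(2)\Leftrightarrow(3)\Rightarrow(4)\Rightarrow(1)$ in place, all four conditions are equivalent.
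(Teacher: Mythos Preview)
The paper does not actually prove this theorem: its entire proof is the single line ``See \cite[Theorem 2.1, p.~6]{LT}.'' So there is no argument in the paper to compare yours against; you have supplied a self-contained proof where the author only cites one.

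Your argument is correct. The chain $(1)\Leftrightarrow(2)\Leftrightarrow(3)$ is indeed just unpacking definitions together with the Eberlein--\v{S}mulian characterization of Dunford--Pettis operators. The step $(4)\Rightarrow(1)$ via $K=T^{*}(B_{Y^{*}})$ and Gantmacher is the standard dual manoeuvre. For $(3)\Rightarrow(4)$, your construction of the auxiliary Banach space $Y_{0}$ from the gauge of $W=\overline{\mathrm{aco}}^{\,w^{*}}(K)$, the bipolar identification $W=q^{*}(B_{Y_{0}^{*}})$, and the compact-to-Hausdorff upgrade of weak$^{*}$-compactness of $W$ to genuine weak compactness are all sound. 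Two small points worth making explicit in a final write-up: (i) since $W$ is balanced, $\sup_{x^{*}\in W}|x^{*}(x_{n})|=\sup_{x^{*}\in W}x^{*}(x_{n})$, and condition~$(*)$ passes from $K$ to $W$ because both $\{x_{n}\}$ and $\{-x_{n}\}$ are weakly null and the evaluation $x^{*}\mapsto x^{*}(x_{n})$ is weak$^{*}$-continuous; (ii) the bipolar computation in fact gives $W=q^{*}(B_{Y_{0}^{*}})$ on the nose (no closure needed), since every $f\in W$ satisfies $|f|\le p$ and hence factors through $Y_{0}$ with norm $\le 1$. Neither point affects the validity of your outline.
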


\begin{proof} See \cite[Theorem 2.1, p. 6]{LT}
\end{proof}

\begin{theorem} If $X$ is a Banach space for which $(X^{*},\tau (X^{*},X))$ is angelic, then $\tau^{+ }\preceq\tau^{p}$.
\end{theorem}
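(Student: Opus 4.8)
The plan is to prove the condition labelled (b) in Webb's proposition characterizing when $\tau^{+}\preceq\tau^{p}$, namely that every $\tau$-precompact subset of $X^{*}$ is $\tau^{+}$-precompact; since that proposition shows (b) is equivalent to (a), which is exactly the desired conclusion $\tau^{+}\preceq\tau^{p}$, verifying (b) finishes the proof. So I fix a $\tau$-precompact set $A\subseteq X^{*}$ (here $\tau=\tau(X^{*},X)$) and aim to show it is $\tau^{+}$-precompact, and the route I will take is through sequential precompactness.

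First I would manufacture a genuinely $\tau$-compact set containing $A$ as a dense subset. Since $\tau^{p}$ is by definition the finest locally convex topology with the same precompact sets as $\tau$, the set $A$ is $\tau^{p}$-precompact; and because $(X^{*},\tau^{p})$ is complete (by the completeness lemma), its $\tau^{p}$-closure $K:=\overline{A}^{\,\tau^{p}}$ is $\tau^{p}$-compact. As $\tau\preceq\tau^{p}$, the set $K$ is also $\tau$-compact, hence $\tau$-closed; and since a coarser topology gives a larger closure, $\overline{A}^{\,\tau}\supseteq\overline{A}^{\,\tau^{p}}=K$, while $A\subseteq K$ forces $\overline{A}^{\,\tau}\subseteq K$, so $\overline{A}^{\,\tau}=K$. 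Thus $A$ is $\tau$-dense in the $\tau$-compact set $K$. Now I invoke the hypothesis: because $(X^{*},\tau)$ is angelic, the Eberlein--\v{S}mulian consequence of angelicity makes the compact set $K$ $\tau$-sequentially compact, so every sequence drawn from $A\subseteq K$ has a $\tau$-convergent, hence $\tau$-Cauchy, subsequence.

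Next I would transfer Cauchyness from $\tau$ to $\tau^{+}$, using that $\tau$ and $\tau^{+}$ share the same convergent (equivalently, null) sequences. If $\{y_{n}\}$ is $\tau$-Cauchy but not $\tau^{+}$-Cauchy, there is a $\tau^{+}$-continuous seminorm $p$, an $\varepsilon>0$, and indices $n_{k}<m_{k}\to\infty$ with $p(y_{n_{k}}-y_{m_{k}})\geq\varepsilon$; but the Cauchy condition forces $y_{n_{k}}-y_{m_{k}}\to 0$ in $\tau$, hence in $\tau^{+}$, so $p(y_{n_{k}}-y_{m_{k}})\to 0$, a contradiction. Therefore every $\tau$-Cauchy sequence is $\tau^{+}$-Cauchy, and combining with the previous paragraph, every sequence in $A$ has a $\tau^{+}$-Cauchy subsequence. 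A standard extraction argument then upgrades this sequential precompactness to genuine precompactness: were $A$ not $\tau^{+}$-precompact, a balanced $\tau^{+}$-neighborhood $U$ of $0$, together with a balanced $V$ satisfying $V+V\subseteq U$, would let me inductively pick $a_{n+1}\in A\setminus\bigcup_{i\le n}(a_{i}+U)$, producing a sequence with no $\tau^{+}$-Cauchy subsequence. Hence $A$ is $\tau^{+}$-precompact, and condition (b) is verified.

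I expect the main obstacle to be the first step, where completeness of $(X^{*},\tau)$ itself is unavailable: a $\tau$-precompact set need not be $\tau$-relatively compact inside $X^{*}$, so I must borrow compactness from a completion. The chosen remedy is to route through $\tau^{p}$, whose completeness is already in hand and whose precompact sets coincide with those of $\tau$; this is precisely what lets me realize $A$ as a dense subset of an honestly $\tau$-compact $K$, so that angelicity of $(X^{*},\tau)$ can be applied. The only other point needing care is the implicit Eberlein--\v{S}mulian consequence of angelicity, that a compact set in an angelic space is sequentially compact, which is exactly the property already used in the earlier corollary.
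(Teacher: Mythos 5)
Your proof is correct, and its skeleton matches the paper's: pass from $\tau$-precompactness to $\tau$-relative compactness, use angelicity to extract convergent subsequences, transfer the information to $\tau^{+}$ via the shared convergent sequences, and conclude $\tau^{+}$-precompactness. Where you diverge is at both ends of the argument. At the start, the paper simply asserts that $(X^{*},\tau)$ is complete and deduces relative compactness of $A$ directly, whereas you borrow compactness from a completion by routing through $\tau^{p}$, whose completeness is Lemma 2.9 and whose precompact sets agree with those of $\tau$; both are legitimate (completeness of the Mackey dual of a Banach space follows from the Grothendieck completeness theorem by the same argument as in Lemma 2.9), but your detour has the merit of using only what is explicitly established in the paper. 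At the end, the paper converts ``$\tau^{+}$-relatively sequentially compact'' into ``$\tau^{+}$-relatively countably compact'' and then invokes Eberlein's theorem together with completeness of $(X^{*},\tau^{+})$ to obtain $\tau^{+}$-relative compactness, hence precompactness; you instead show that every sequence in $A$ has a $\tau^{+}$-Cauchy subsequence and run the elementary extraction $a_{n+1}\in A\setminus\bigcup_{i\le n}(a_{i}+U)$ to get total boundedness directly. Your route is more elementary --- it needs neither the Eberlein--Grothendieck theorem nor completeness of $\tau^{+}$ --- at the cost of the small lemma that $\tau$-Cauchy sequences are $\tau^{+}$-Cauchy, which you prove correctly from the coincidence of null sequences. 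Both proofs rest on the same standard fact that compact subsets of angelic spaces are sequentially compact, and both conclude via the definition of $\tau^{p}$ as the finest locally convex topology with the same precompact sets as $\tau$.
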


\begin{proof} Let $A\subset X^{*}$ be a $\tau$-precompact set. Since $\tau$ is complete, $A$ is $\tau$-relatively compact. By hypothesis, $A$ is $\tau$-relatively sequentially compact, and hence $\tau^{+}$-relatively sequentially compact. So $A$ is $\tau^{+}$-relatively countably compact. Since $\tau^{+}$ is complete (by the Lemma 2.9), $A$ is $\tau^{+}$-relatively compact, by Eberlein's Theorem, and hence $A$ is  $\tau^{+}$-precompact. Thus $\tau^{+}$ has the same precompact sets as $\tau$, and so $\tau^{+} \preceq \tau^{p}$, by definition of $\tau^{p}$.
\end{proof}

We now give a necessary and sufficient condition for the equation $\tau^{+} = \tau^{p}$ to hold.

\begin{theorem} Let $X$ be a Banach space. Then  $\tau^{+} = \tau^{p}$ if and only if $X$ has the RDP. 
\end{theorem}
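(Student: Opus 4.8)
The plan is to deduce the equivalence from Webb's Propositions~2.5 and~2.6 together with the characterization of the RDP in Theorem~3.14(4). The first move is a reduction: because $\tau=\tau(X^{*},X)$ is sequentially barrelled and the pair $\langle X,X^{*}\rangle$ satisfies \v{S}mulian's condition (the Eberlein--\v{S}mulian theorem applied to $X$), Proposition~3.7 shows that the $\tau$-precompact subsets of $X^{*}$ are precisely the sets $K$ satisfying condition~$(*)$. Hence Theorem~3.14(4) may be restated as: $X$ has the RDP if and only if every $\tau$-precompact subset of $X^{*}$ is $\sigma(X^{*},X^{**})$-relatively sequentially compact. On the other side, Proposition~2.5 identifies $\tau^{p}\preceq\tau^{+}$ with $(X^{*})^{+}=X^{**}$, and Proposition~2.6(b) identifies $\tau^{+}\preceq\tau^{p}$ with the condition that every $\tau$-precompact subset of $X^{*}$ be $\tau^{+}$-precompact. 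So the whole theorem splits into matching these two comparisons with the reformulated RDP.

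First I would assume $X$ has the RDP and prove $\tau^{+}=\tau^{p}$. For $\tau^{p}\preceq\tau^{+}$, note that $\tau^{+}\preceq\tau^{b}$ (Proposition~2.4) already gives $(X^{*})^{+}\subseteq X^{**}$, so it suffices to show every $\phi\in X^{**}$ is $\tau$-sequentially continuous. Given a $\tau$-null $\{x_{n}^{*}\}$, the $\tau$-compact set $\{x_{n}^{*}\}\cup\{0\}$ is $\sigma(X^{*},X^{**})$-relatively sequentially compact by the RDP; since its closure in the complete space $(X^{*},\tau^{p})$ (Lemma~2.9) carries $\sigma(X^{*},X^{**})=\tau^{p}\succeq\tau$, every $\sigma$-convergent subsequence has $\tau$-limit $0$, forcing $x_{n}^{*}\to 0$ weakly and hence $\phi(x_{n}^{*})\to0$. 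For $\tau^{+}\preceq\tau^{p}$, I take a $\tau$-precompact $K$ and a sequence in it; the RDP yields a $\sigma(X^{*},X^{**})$-convergent subsequence, which on the $\tau^{p}$-compact closure (where $\sigma=\tau^{p}\succeq\tau$) is $\tau$-convergent, hence $\tau^{+}$-convergent because $\tau^{+}$ has the same convergent sequences as $\tau$. Thus $K$ is $\tau^{+}$-relatively countably compact; completeness of $\tau^{+}$ (Lemma~2.9) and Eberlein's theorem then make $K$ $\tau^{+}$-relatively compact, so $\tau^{+}$-precompact, which is exactly Proposition~2.6(b).

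Conversely, I would assume $\tau^{+}=\tau^{p}$. Equal topologies have equal duals and equal precompact sets, so $(X^{*})^{+}=X^{**}$ and every $\tau$-precompact set is $\tau^{+}$-precompact. To verify the reformulated RDP, let $K$ be $\tau$-precompact; then $\overline{K}^{\tau^{p}}$ is $\tau^{p}=\tau^{+}$-compact (Lemma~2.9) and $\sigma(X^{*},X^{**})$-compact. Given a norm-bounded sequence in $K$, Rosenthal's $\ell_{1}$ theorem offers either a weakly Cauchy subsequence---which $\sigma(X^{*},X^{**})$-converges to the cluster point supplied by compactness of $\overline{K}^{\tau^{p}}$---or a subsequence equivalent to the $\ell_{1}$-basis. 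The latter I would exclude: Hahn--Banach produces a $\phi\in X^{**}=(X^{*})^{+}$ oscillating along such a subsequence, while the $\tau^{p}$-compactness of $\overline{K}^{\tau^{p}}$ (together with $\sigma=\tau^{p}\succeq\tau$ there and the shared convergent sequences of $\tau^{+}$ and $\tau$) extracts a $\tau$-convergent subsequence on which the $\tau$-sequential continuity of $\phi$ is violated. Hence every sequence in $K$ has a $\sigma(X^{*},X^{**})$-convergent subsequence, so $X$ has the RDP.

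I expect the reverse implication to be the main obstacle, and within it the clean disposal of the $\ell_{1}$-alternative: compactness of $\overline{K}^{\tau^{p}}$ only furnishes cluster points, not convergent subsequences, so the contradiction must be engineered by combining Rosenthal's dichotomy with $(X^{*})^{+}=X^{**}$ and the coincidence of $\tau^{p}$ with $\sigma(X^{*},X^{**})$ on compacta to turn a cluster point into a genuinely $\tau$-convergent subsequence. By comparison the forward implication is more routine; its only delicate step is transporting $\sigma(X^{*},X^{**})$-sequential convergence to $\tau^{+}$-convergence, which is handled by the shared-convergent-sequence property of $\tau^{+}$, the completeness in Lemma~2.9, and Eberlein's theorem.
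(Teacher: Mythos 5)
Your forward direction (RDP $\Rightarrow\tau^{+}=\tau^{p}$) is essentially sound and runs parallel to the paper's: the paper reaches the same two conclusions by quoting Schl\"uchtermann--Wheeler's Proposition 2.1 (RDP iff every $\tau$-compact set is weakly compact) and Webb's Proposition 2.10, whereas you route everything through the reformulation of Theorem 3.14(4) via Proposition 3.7; both transfers from weak convergence to $\tau$-convergence on the $\tau$-compact closure are legitimate. The problem is the converse. Your disposal of the $\ell_{1}$-alternative rests on the claim that ``the $\tau^{p}$-compactness of $\overline{K}^{\tau^{p}}$ \dots extracts a $\tau$-convergent subsequence,'' and this is exactly the step that fails: compactness of $\overline{K}^{\tau^{p}}$ yields cluster points only, and nothing in the hypothesis $\tau^{+}=\tau^{p}$ makes that compactum sequentially compact \emph{a priori} --- indeed producing convergent subsequences is precisely what you are trying to prove. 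Even granting a $\tau$-convergent sub-subsequence, a functional $\phi$ that ``oscillates'' along the original $\ell_{1}$-subsequence (say taking values $0,1,0,1,\dots$) may perfectly well converge along a further subsequence, so no contradiction with $\phi\in(X^{*})^{+}$ is forced. You flag this yourself as the unresolved obstacle, so the proof as proposed is incomplete at its crux.

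The irony is that the Rosenthal dichotomy is a detour you do not need. You have already established that $\overline{K}^{\tau^{p}}$ is $\sigma(X^{*},X^{**})$-compact, i.e.\ a weakly compact subset of the Banach space $X^{*}$; the Eberlein--\v{S}mulian theorem then gives weak sequential compactness outright, so every sequence in $K$ has a $\sigma(X^{*},X^{**})$-convergent subsequence and the reformulated RDP follows with no case analysis. Alternatively, the paper sidesteps sequential compactness altogether in this direction by using the Schl\"uchtermann--Wheeler characterization of the RDP in terms of weak compactness of $\tau$-compact sets: a $\tau$-compact $A$ is $\tau^{p}$-precompact, hence $\tau^{+}$-precompact, $\tau^{+}$-closed, and therefore $\tau^{+}$-compact by completeness of $\tau^{+}$, whence $\sigma(X^{*},X^{**})$-compact because $(X^{*},\tau^{+})'=X^{**}$. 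Either repair closes the gap; as written, your argument does not.
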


\begin{proof} $\Rightarrow$ Assume that $\tau^{+ } = \tau^{p}$. Then by Propositions 2.8 and 2.10 in \cite{WJ}, $(X^{*},\tau)^{+} =X^{**}$ and $\tau^{+ } = \sigma (X^{*},X^{**})$ on every $\tau$-precompact subset of $X^{*}$. If $A\subset X^{*}$ is $\tau$-compact, it is $\tau^{p}$-precompact and hence $\tau^{+}$-precompact (by hypothesis). Since $A$ is $\tau$-closed and $\tau\preceq \tau^{+}$, $A$ is $\tau^{+}$-closed. So $A$ is $\tau^{+}$-compact, and hence $\sigma (X^{*},X^{**})$-compact.Thus $X$ has RDP by Proposition 2.1 in \cite{SG2}. 

\

$\Leftarrow$ Assume that $X$ has $RDP$. Let $A\subset X^{*}$ be $\tau$-precompact. Then by Proposition 2.1 in \cite{SG2}, $\overline{A}^{\tau}$ is weakly compact. Let $\tau_{0}^{+}$ be the restriction to $\overline{A}^{\tau}$ be  of the $\tau^{+}$-topology. Let $B\subset \overline{A}^{\tau}$ be $\tau_{0}^{+}$-closed in $\overline{A}^{\tau}$ and let $b^{*}\in\overline{B}^{w}\,\,(\subset \overline{A}^{\tau})$. Since $B$ is weakly compact, it follows from the angelicity of the weak topology of a Banach space that there is a sequence $\{b_{n}^{*}\}$ in $B$ that converges weakly to $b^*$. $\{b_{n}^{*}\}$ also converges for the weak$^*$-topology to $b^*$. Since $\overline{A}^{\tau}$ is $\tau$-compact, $\{b_{n}^{*}\}$ converges for $\tau$ to $b^*$. Thus $\{b_{n}^{*}\}$ converges to $b^*$ in the $\tau^{+}$ topology. Since $B$ is $\tau^{+}$-closed in $A$, we get that $b^{*}\in B$ and so $B$ is weakly closed. Thus $\tau^{+}$ and the weak topology agree on $\overline{A}^{\tau}$, and hence $\tau^{+} \preceq \tau^{p}$ by Proposition 2.10 in \cite{WJ}.

\

It remains to show that $\tau^{p} \preceq \tau^{+}$. By Proposition 2.8 in \cite{WJ}, it suffices to show that $X^{**} =  (X^{*}, \tau)^{+}$. It is almost trivial to prove that every $f: (X^{*},\tau) \rightarrow \mathbb{R}$ which is sequentially continuous belongs to $X^{**}$ (It is bounded on bounded sets in $(X^*,||\cdot ||)$). Conversely if $f\in X^{**}$, and $\{x_{n}^{*}\}$ is $\tau$-null, the set $\{x_{n}^{*} : n\in\mathbb{N}\}\cup \{0\}$ is $\tau$-compact, and hence weakly compact, by Proposition 2.1 in \cite{SG2}. Since $\{x_{n}^{*}\}$ is weak$^{*}$-null, it is weakly-null, so $f(x_{n}^{*}) \rightarrow 0$, and so $f\in (X^{*},\tau)^{+}$.
\end{proof}

We remark that a Banach space $X$ with the RDP can contain an isomorphic copy
of $\ell_{1}$. For example, if $X = C(K)$, the space of continuous functions on a compact Hausdorff space $K$, then $X$ has the RDP and, in the case when $K$ is the closed interval $[0,1]\subset \mathbb{R}$, $X$ is separable and hence $\tau (X^{*}, X)$ is angelic (see the remark that follows). Yet $X$ contains a copy of $\ell_{1}$.

\begin{remark}
Corollary 3.5 can be formulated by saying that {\it If a Banach space $X$ does not contain an isomorphic copy of $\ell_1$, then $(X^{*},\tau (X^{*}, X)$ is angelic.} In view of Corollary 2.4 in \cite{SG2}, the following is an equivalent formulation: {\it If a Banach space X has HRDP, then $(X^{*},\tau (X^{*}, X)$ is angelic.} Actually a more general result, which we now prove, holds. 
\end{remark}

\begin{theorem} If a Banach space $X$ has RDP, then $(X^{*},\tau(X^{*},X))$ is angelic.
\end{theorem}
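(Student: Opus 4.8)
The plan is to run the same topological argument that already appears in the backward half of Theorem 3.16 (the implication ``$X$ has RDP $\Rightarrow \tau^{+}=\tau^{p}$''), but to aim it directly at the two defining properties of an angelic space rather than at the identity $\tau^{+}=\tau^{p}$. The two facts that drive everything are: (a) when $X$ has RDP, the $\tau$-closure of a $\tau$-precompact subset of $X^{*}$ is $\sigma(X^{*},X^{**})$-compact (Proposition 2.1 in \cite{SG2}); and (b) the weak topology $\sigma(X^{*},X^{**})$ of a Banach space is itself angelic, by the Eberlein--\v{S}mulian theorem (see \cite{FK80}). The role of RDP is precisely to transport compactness from $\tau$ to the weak topology, where sequences are available.

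So, first I would take an arbitrary $\tau$-relatively countably compact set $A\subset X^{*}$ and show it is $\tau$-relatively compact: since the Mackey dual $(X^{*},\tau)$ is complete, this is exactly Eberlein's theorem, just as in Corollary 3.5. Write $K=\overline{A}^{\,\tau}$, a $\tau$-compact set; this already gives property (i) of angelicity. Next I would invoke (a) to conclude that $K$ is $\sigma(X^{*},X^{**})$-compact. The crucial intermediate step is to check that $\tau$ and $\sigma(X^{*},X^{**})$ coincide on $K$: both are compact Hausdorff topologies on $K$, and each is finer than the Hausdorff topology $\sigma(X^{*},X)$ (the Mackey topology always dominates the weak$^{*}$ topology, and $\sigma(X^{*},X)\preceq\sigma(X^{*},X^{**})$ because $X\subset X^{**}$); a continuous bijection from a compact space onto a Hausdorff space is a homeomorphism, so each of $\tau$ and $\sigma(X^{*},X^{**})$ agrees with $\sigma(X^{*},X)$ on $K$, hence with one another. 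In particular $\overline{A}^{\,\tau}=\overline{A}^{\,\sigma(X^{*},X^{**})}=K$.

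Finally I would verify property (ii). Since $K$ is weakly compact and $A\subset K$, the set $A$ is relatively weakly compact, so for any $b^{*}\in K=\overline{A}^{\,\sigma(X^{*},X^{**})}$ the angelicity of the weak topology (fact (b)) produces a sequence $\{a_{n}\}\subset A$ with $a_{n}\to b^{*}$ weakly. As $\{a_{n}\}$ and $b^{*}$ all lie in $K$, where $\sigma(X^{*},X^{**})$ and $\tau$ agree, the same sequence converges to $b^{*}$ in $\tau$. Thus every point of $\overline{A}^{\,\tau}$ is a $\tau$-limit of a sequence from $A$, and together with the first step this shows $(X^{*},\tau)$ is angelic. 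I expect the main obstacle to be bookkeeping rather than conceptual: one must be careful that the three topologies genuinely collapse on $K$ and that the weak closure of $A$ coincides with $K$, so that the sequence supplied by weak angelicity actually lives in $A$ and converges in the Mackey topology. This coincidence-of-compact-topologies lemma is the heart of the matter, and it is the same device already exploited in the proof of Theorem 3.16.
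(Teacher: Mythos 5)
Your proof is correct, and its overall skeleton matches the paper's: pass from relative countable compactness to compactness via completeness of the Mackey dual and Eberlein's theorem, use Proposition 2.1 of Schl\"{u}chtermann--Wheeler to make $K=\overline{A}^{\tau}$ weakly compact, show the topologies collapse on $K$, and then import sequences from the angelicity of the weak topology. The one genuine divergence is in the collapse step. The paper first invokes Theorem 3.16 to get $\tau^{+}=\tau^{p}$ and then cites Webb's Proposition 2.10 to conclude that $\sigma(X^{*},X^{**})$ and $\tau^{+}$ (hence $\tau$) agree on $\overline{A}^{\tau}$; you instead observe that $\tau|_{K}$ and $\sigma(X^{*},X^{**})|_{K}$ are both compact topologies dominating the Hausdorff topology $\sigma(X^{*},X)|_{K}$, so both are homeomorphic to it via the identity and therefore coincide. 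Your route is more elementary and self-contained: it needs nothing from the $\tau^{+}/\tau^{p}$ machinery or from Theorem 3.16, only the single input that RDP turns $\tau$-compact sets into weakly compact ones. What the paper's route buys is consistency with the framework it has already built (the identity $\tau^{+}=\tau^{p}$ is the paper's characterization of RDP, so reusing it costs nothing there), but as a standalone argument yours is cleaner. All the supporting claims you make --- that $\sigma(X^{*},X)\preceq\tau$ and $\sigma(X^{*},X)\preceq\sigma(X^{*},X^{**})$, that $\overline{A}^{\tau}=\overline{A}^{w}$ once the topologies agree on $K$, and that weak convergence of a sequence inside $K$ upgrades to $\tau$-convergence --- check out.
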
 

\begin{proof} 
By Theorem 3.15, we have $\tau^{+} = \tau^{p}$ and so by Proposition 2.5 in \cite{WJ}, $(X^{*},\tau^{+})' = X^{**}$. Thus $\sigma (X^{*},X^{**}) \prec\tau^{+}$. If $A\subset X^{*}$ is $\tau$-relatively countably compact, then by the Eberlein theorem, $A$ is $\tau$-relatively compact, and hence $\overline{A}^{\tau}$ is $\tau$-compact. Proposition 2.1 in \cite{SG2} now  gives that $\overline{A}^{\tau}$ is $\sigma (X^{*},X^{**})$-compact. By Proposition 2.10 in \cite{WJ} the topologies $\sigma(X^{*},X^{**})$ and $\tau^{+}$ coincide on $\overline{A}^{\tau}$. This shows that $\overline{A}^{\tau}$ is $\tau^{+}$-compact, and so the topologies $\sigma (X^{*},X^{**}),\,\,\tau$ and $\tau^{+}$ coincide on $\overline{A}^{\tau}$, Since $\sigma (X^{*},X^{**})$ is angelic, given $a^{*}\in\overline{A}^{\tau}$, there is a sequence in $A$ that converges with respect to $\sigma (X^{*},X^{**})$ to $a^{*}$. This sequence perforce converges with respect to $\tau$ to $a^{*}$. Hence $(X^{*},\tau(X^{*},X))$ is angelic.
\end{proof}

\

As promised, we conclude this note by adding our results to the list of some well-known and some not so well-known characterizations of Banach spaces which do not contain a copy of $\ell_{1}$.

\

We first note two facts: [(a)] A Banach space is sequentially reflexive iff $\tau^{+} = ||\cdot ||$. [(b)] Let $\mathcal{K}$ (respectively, $\mathcal{P}$) be the family of all $||\cdot ||$-compact (respectively, $\tau$-compact) subsets of $X^*$. Note that $\mathcal{K}$ and $\mathcal{P}$ are both saturated families of bounded subsets of $X^*$ for the dual pair $<X,X^*>$ in the sense of \S\,21 in \cite{KG}. Let $||\cdot||_{c}$ be the topology on $X$ of uniform topology on the sets in $\mathcal{K}$. Then as mentioned in Corollary 3.9 $||\cdot ||^{n} = ||\cdot ||_{c}$ by Grothendieck's compactness principle. Observe too that 
$$||\cdot ||^{n} \preceq \tau^{n}\preceq \tau^{0}$$.

We now define two concepts which are mentioned in the next theorem.

\begin{definition} A Hausdorff space $X = (X,\mathcal{T})$ is a {\it k-space} iff a set $K\subset X$ is closed iff $K\cap C$ is closed for each compact $C\subset X$. Equivalently, $X$ is a {\it k-space} iff the topology of $\mathcal{T}$ is the finest topology yielding the same compact sets as itself, i.e., iff whenever  $\mathcal{T_{0}}$ is another topology on $X$ with the same compact sets as $\mathcal{T}$, then $\mathcal{T_{0}}\prec\mathcal{T}$.
\end{definition} 

\begin{definition}
A subset $C$ of Hausdorf space $X$ is said to be {\it sequentially closed} iff whenever $\{x_n\}$ is a sequence in $C$ converging to a point $x_0$, we have $x_0\in C$.The space $X$ is said to be sequential if every sequentially closed subset of $X$ is closed.  
\end{definition}

\begin{theorem}
Let $(X,||\cdot ||)$ be a Banach space, and let $\tau = \tau (X^*,X)$ be the Mackey dual topology on $X^*$. Then the following statements are equivalent:

\begin{itemize}
\item[(i)] $X$ does not contain an isomorphic copy of $\ell_1$.
\item[(ii)] $X$ is sequentially reflexive, i.e., $\tau^{+} = ||\cdot ||$ on $X^*$.
\item[(iii)] Every $\tau$-compact subset of $X^*$ is $||\cdot ||$-compact.
\item[(iv)] $X$ has HRDP, i.e., every closed subspace of $X$ has RDP.
\item[(v)] $(B_{X},\sigma (X,X^*))$ is a Fr$\acute{e}$chet-Urysohn space.
\item[(vi)] $(B_{X},\sigma (X,X^*))$ is a sequential space.
\item[(vii)] $(B_{X},\sigma (X,X^*))$ is a k-space.
\item[(viii)] For every subset $K$ of $X^*$ that is both weakly compact and $\tau$-compact, there is a sequence $\{x_{n}^{*}\}$ in $X^*$ that is both weakly null and $\tau$-null such that $K$ is contained in the closed convex hull of $\{x_{n}^{*}: n\in\mathbb{N}\}$.
\item[(ix)] $\tau^{0} = ||\cdot ||_{c}$ on $X$.
\item[(x)] $\tau^{0}|_{B_{X}} = \sigma (X,X^*)|_{B_{X}}$.
\end{itemize}
\end{theorem}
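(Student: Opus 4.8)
The plan is to treat (i) as a hub, dispatch the equivalences (i)--(iv) by citation, and then attach the genuinely new topological conditions (v)--(x) to this hub, with Rosenthal's $\ell_1$ theorem supplying the essential mechanism. The equivalences among the first four are immediate from results already recorded: (i)$\Leftrightarrow$(ii) is the Orno--Valdivia theorem (Theorem 3.2), (i)$\Leftrightarrow$(iii) is Emmanuele's theorem (Theorem 3.3), and (i)$\Leftrightarrow$(iv) is \cite[Proposition 2.4]{SG2}. I would record these first so that every later implication may be closed to (i) rather than to each intermediate statement.

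For the three conditions on the weak ball, the implications (v)$\Rightarrow$(vi)$\Rightarrow$(vii) are free, being the standard point-set facts that a Fr\'echet--Urysohn space is sequential and a sequential space is a $k$-space; so only the return must be supplied, and I would do this with (vii)$\Rightarrow$(i) together with (i)$\Rightarrow$(v). For (i)$\Rightarrow$(v), given $A\subset B_X$ and $x\in\overline{A}^{\,w}$, I would first pass to a countable $A_{0}\subset A$ with $x\in\overline{A_{0}}^{\,w}$ (weak countable tightness of $B_X$, itself extracted from $\ell_1$-freeness via Rosenthal), then apply Rosenthal's theorem to the bounded sequence enumerating $A_0$ to obtain a weakly Cauchy, hence $w^{*}$-convergent, subsequence in $X^{**}$; the decisive point is to select that subsequence so that its $w^{*}$-limit is exactly the prescribed cluster point $x$, which then forces weak convergence to $x$ in $X$. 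For (vii)$\Rightarrow$(i) I would argue contrapositively: embedding $\ell_1\hookrightarrow X$ by $\{x_n\}$ with closed span $Y\cong\ell_1$ Schur, every weakly compact subset of $Y$ is norm compact, and I would exhibit a subset of $B_Y\subset B_X$ that meets every weakly compact set in a relatively closed set yet is not weakly closed, contradicting (vii).

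The remaining conditions attach to the hub as follows. For (viii), under (i)/(iii) Emmanuele's theorem collapses ``weakly compact and $\tau$-compact'' to ``norm compact,'' and Grothendieck's compactness principle (already invoked in Corollary 3.9) then produces a norm-null---hence weakly null and $\tau$-null---sequence whose closed convex hull captures $K$; the converse is again a Rosenthal-type construction inside a copy of $\ell_1$. For (ix) and (x) I would use the identifications recorded before the statement, $\tau^{0}|_{X}=\sigma(X,X^{*})^{+}$ and $||\cdot||^{n}=||\cdot||_{c}$: condition (iii) says exactly that the $\tau$-precompact and norm-precompact subsets of $X^{*}$ coincide, and since $\mathcal{P}$ and $\mathcal{K}$ are saturated this is equivalent to equality of the associated polar topologies $\tau^{0}=||\cdot||_{c}$, giving (iii)$\Leftrightarrow$(ix); condition (x) reads $\sigma(X,X^{*})^{+}=\sigma(X,X^{*})$ on $B_X$, which I would tie back to the sequential and Fr\'echet--Urysohn behaviour of the weak ball in (v) and (vi).

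The main obstacle is the implication (i)$\Rightarrow$(v). Angelicity of the weak topology, which holds for every Banach space, governs only relatively weakly countably compact sets, whereas the Fr\'echet--Urysohn property is a statement about \emph{arbitrary} subsets of $B_X$; bridging this gap demands both the upgrade of $\ell_1$-freeness to weak countable tightness and the delicate subsequence selection that pins the $w^{*}$-limit of a Rosenthal-extracted weakly Cauchy subsequence onto the given cluster point. This selection is precisely where, as anticipated in the introduction, Rosenthal's $\ell_1$ theorem does its essential work.
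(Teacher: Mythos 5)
Your overall architecture matches the paper's: the paper proves this theorem almost entirely by citation, routing everything through (i) --- Orno--Valdivia for (ii), Emmanuele for (iii), Schl\"uchtermann--Wheeler for (iv) and for the cycle (v)--(vii), Dowling--Mupasiri for (viii), and K\"othe's polar-topology machinery (\S 21.1(4) and \S 21.7(1)) for (ix) and (x) --- and your treatment of (i)--(iv), (viii), (ix), (x) is consistent with that. The one place where you depart from citation and attempt an actual argument is (i)$\Rightarrow$(v), and that is where there is a genuine gap. Rosenthal's $\ell_1$ theorem, applied to an enumeration of the countable set $A_0$, produces \emph{some} weakly Cauchy subsequence, but it gives you no control whatsoever over the $w^{*}$-limit of that subsequence in $X^{**}$: the limit may be a different cluster point of $A_0$, or an element of $X^{**}\setminus X$. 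There is no ``selection'' built into Rosenthal's dichotomy that targets a prescribed cluster point $x$, and a cluster point of a sequence in a non-Fr\'echet--Urysohn topology is in general not the limit of any subsequence --- that is precisely the property you are trying to establish, so the argument is circular at its crucial step. The tool that closes this gap in the literature is the Bourgain--Fremlin--Talagrand theorem: after reducing to a separable subspace $Y$, one views $A_0\subset B_Y\subset B_1\bigl((B_{Y^*},w^{*})\bigr)$ (the pointwise closure consists of Baire-1 functions because $Y$ contains no $\ell_1$, by Odell--Rosenthal), and BFT asserts that pointwise compact sets of Baire-1 functions on a Polish space are Fr\'echet--Urysohn, which is what delivers a sequence in $A_0$ converging pointwise on $B_{Y^*}$ --- i.e.\ weakly --- to $x$. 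This is substantially deeper than Rosenthal's dichotomy alone, and it is the content of the Schl\"uchtermann--Wheeler result the paper cites.

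Two smaller points. First, the countable tightness of $(B_X,\sigma(X,X^*))$ is Kaplansky's theorem and holds for \emph{every} Banach space; it is not ``extracted from $\ell_1$-freeness via Rosenthal,'' so you should not burden that step with the hypothesis. Second, your (vii)$\Rightarrow$(i) sketch is sound in outline (the unit sphere of a copy of $\ell_1$ is norm-closed, meets every weakly compact --- hence, by Schur, norm compact --- set in a closed set, yet is weakly dense in the ball), but you should make explicit that the $k$-space property passes to the weakly closed subset $B_Y$ of $B_X$, or else exhibit the offending set directly inside $B_X$. With the BFT input supplied for (i)$\Rightarrow$(v), and with (vi)$\Rightarrow$(x)$\Rightarrow$(iii) or a similar closing of the loop for (x) made explicit rather than ``tied back'' informally, your plan would constitute a legitimate self-contained alternative to the paper's citation-based proof.
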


\begin{proof}
(i)$\Leftrightarrow$ (ii) was independently proved by P. Orno and M. Valdivia.\\
(i)$\Leftrightarrow$ (iii) is due to G. Emmanuele. \\
(i)$\Leftrightarrow$ (iv) is due to G. Schl$\ddot{u}$chtermann \& R. F. Wheeler.\\
(i)$\Leftrightarrow$ (v)$\Leftrightarrow$ (vi)$\Leftrightarrow$ (vii) is due to G. Schl$\ddot{u}$chtermann \& R. F. Wheeler. \\
(iii)$\Leftrightarrow$ (ix) If (iii) holds, then $\tau^{0} \preceq ||\cdot ||_{c}$. By (i) we get (ix). Concersely, if (ix) holds, then (iii) follows from \S\,21.1(4) in \cite{KG}\\
(iii)$\Leftrightarrow$ (x) is in [K$\ddot{o}$the, 21.7(1)] for the dual pair $<X,X^*>$ and families $\mathcal{B}$ of all bounded subsets of $X$ and $\mathcal{P}$ of all $\tau$-precompact subsets of $X^*$.\\
(i)$\Leftrightarrow$ (viii) is due to P. N. Dowling and D. Mupasiri
\end{proof}

{\it Remark} A sketch of the proof of (5) $\Rightarrow$ (6) in the Theorem 3.5 was shown to the author by Prof. Robert F. Wheeler. The implication can also be deduced from a result proved by N. Kalton using a completely different argument.

\

The author wishes to thank the referee for pointing out an error in an earlier version of this paper. Thanks to the referee's comment, the author was able to make significant improvements to the final version of the paper.

\bibliographystyle{amsplain}

\begin{thebibliography}{10}

\bibitem {BJ} J. Borwein, \textit{Asplund spcaes are ``sequentially reflexive",}  (preprint).

\bibitem {DJ} J. Diestel, \textit{Sequences and Series in Banach spaces}, Springer-Verlag, Graduate Texts in Mathematics 92, (1984).

\bibitem {DPN} P. N. Dowling, D. Mupasiri, \textit{A Grothendieck compactness principle for the Mackey dual topology}, J. Math. Anal. Appl. \textbf{410} (2014) 483-486.

\bibitem{EG} G. Emmanuele, \textit{A dual characterization of Banach spaces not containing $\ell_{1}$}, Bull. Pol. Acad. Sci. Math. \textbf{34}, no. 3-4, (1986), 155-160.

\bibitem {ER} R. Engelking, \textit{General Topology} Revised and completed ed., Sigma Series in Pure Mathematics, \textbf{vol. 6}, Heldermann Verlag Berlin (1989)

\bibitem {FK80} K. Floret, \textit{Weakly Compact Sets}, LNM \textbf{801}, Springer-Verlag, 1980.

\bibitem {HJ} J. Howard, \textit{Mackey compactness in Banach spaces}, Proc. Amer. Math. Soc. \textbf{37}, no. 2 (1973), 108-110.

\bibitem{GA1} A Grothendieck, \textit{Sur les applications lin\'{e}aires faiblement compactes d'espaces du type C(K)}, Canad. J. Math. \textbf{5} (1953), 129-173.

\bibitem {GA2} A. Grothendieck, \textit{Topological Vector Spaces}, Gordon and Breach Science Publishers, New York (1973).

\bibitem {KJ} J. L. Kelly, I. Namioka, and co-authors, \textit{Linear Topological Spaces}, Springer-Verlag, Graduate Texts in Mathematics 36, Reprint of the ed. published by Von Nostrand, Princeton, N.J., in series: The University series in higher
mathematics (1976).

\bibitem {KN} N. J. Kalton, \textit{Mackey duals and almost shrinking bases}, Proc. Cambridge Philos. Soc. \textbf{74} (1973), 73-81. 

\bibitem {KG} G. K\"{o}the, \textit{Topological Vector Spaces, I}, Springer-Verlag, New York, Inc. (1969).

\bibitem {LT} T. L. Leavelle, \textit{The Reciprocal Dunford-Pettis and Radon-Nikodym Properties in Banach spaces}, dissertation, August, 1984; Denton, Texas.
(digital.library.unt.edu/ark:/67531/metadc331441/: accessed October 28, 2018),
Unviversity of North Texas Libraries, Digital Library, digital.library.unt.edu.

\bibitem {OP} P. Orno, \textit{On J. Borwein's concept of sequentially reflexive Banach spaces}, arXiv:math/9201233v1 [math.FA] 9 Oct. 1991.

\bibitem {PJD71} J. D. Pryce, \textit{A device of R. J. Whitley's applied to pointwise compactness in spaces of continuous functions}, Proc. London Math. soc. (3) \textbf{23} (1971), 532-546.

\bibitem {SG1} G. Schl\"{u}chtermann, R. F. Wheeler, \textit{On strongly WCG Banach spaces}, Math. Z. \textbf{199} (1988), no. 3, 387-398.

\bibitem {SG2} G. Schl\"{u}chtermann, R. F. Wheeler, \textit{The Mackey dual of a Banach space}, Note Mat. \textbf{11} (1991), 273-287.

\bibitem {SH} H. H. Schaefer, \textit{Topological Vector Spaces}, Springer-Verlag, New York (Fifth printing, 1986).

\bibitem {RH} H. P. Rosenthal, \textit{A characterization of Banach spaces containing $\ell_{1}$}, Proc. Nat. Acad. Sci. \textbf{71}, (1974), 2411-2413.

\bibitem {VM93} M. Valdivia, \textit{Fr\'{e}chet spaces with no subspaces isomorphic to $\ell_1$}, Math. Japonica \text{38} (1993) 397-411.

\bibitem {WJ} J. H. Webb, \textit{Sequential convergence in locally convex spaces}, Proc. Cambridge Philos. Soc. \textbf{64} (1968), 341-364.

\bibitem {WA} A. Wilansky, \textit{Modern Methods in Topological Vector Spaces}, McGraw-Hill International Book Co. (1978).

\bibitem {WS} S. Willard, \textit{General Topology} Addison-Wesley Series in Mathematics, Addison-Wesley Publishing Co., Inc., Reading Massachusetts (1970).


\end{thebibliography}

\end{document}